\newtheorem{theorem}{Theorem}
\newtheorem{proposition}{Proposition}
\newtheorem{lemma}{Lemma}
\newtheorem{corollary}{Corollary}
\newtheorem{example}{Example}
\newtheorem{definition}{Definition}
\def\di{\displaystyle}
\newcommand{\N}{\mathbb{N}}
\newcommand{\R}{\mathbb{R}}
\newcommand{\LL}{\mathcal{L}}
\newcommand{\CC}{\mathscr{C}}
\newcommand{\Cc}{\mathscr{C}^\infty_{\mathrm{c}}}
\newcommand{\PP}{\mathscr{P}}
\newcommand{\W}{\mathrm{W}}
\renewcommand{\L}{\mathrm{L}}
\newcommand{\E}{\mathrm{E}}
\newcommand*{\hooktwoheadrightarrow}{\lhook\joinrel\twoheadrightarrow}
\newcommand{\fonction}[5]{\begin{array}[t]{lrcl}#1 :&#2 &\longrightarrow &#3\\&#4& \longmapsto &#5 \end{array}}
\newcommand{\fonctionsansdef}[3]{\begin{array}[t]{lrcl}#1 :&#2 &\longrightarrow &#3 \end{array}}
\begin{document}

\thanks{This is a preprint of a paper whose final and definite form will appear 
in \emph{Differential and Integral Equations}, ISSN 0893-4983 
(See http://www.aftabi.com/DIE.html). Submitted 19/July/2013; Accepted 16/March/2014.}

\title[]{Existence of minimizers for generalized Lagrangian functionals
and a necessary optimality condition --- Application to fractional variational problems}

\author{Lo\"ic Bourdin}

\address{Laboratoire de Math\'ematiques et de leurs Applications - Pau (LMAP).
UMR CNRS 5142. Universit\'e de Pau et des Pays de l'Adour, France.}
\email{bourdin.l@etud.univ-pau.fr}

\author{Tatiana Odzijewicz}
\address{Center for Research and Development in Mathematics and Applications,
Department of Mathematics, University of Aveiro, 3810-193 Aveiro, Portugal.}
\email{tatianao@ua.pt}

\author{Delfim F.M. Torres}
\address{Center for Research and Development in Mathematics and Applications,
Department of Mathematics, University of Aveiro, 3810-193 Aveiro, Portugal.}
\email{delfim@ua.pt}

\maketitle


\begin{abstract}
We study dynamic minimization problems of the calculus of variations with generalized
Lagrangian functionals that depend on a general linear operator $K$ and defined on
bounded-time intervals. Under assumptions of regularity, convexity and coercivity,
we derive sufficient conditions ensuring the existence of a minimizer. Finally,
we obtain necessary optimality conditions of Euler--Lagrange type. Main results
are illustrated with special cases, when $K$ is a general kernel operator and,
in particular, with $K$ the fractional integral of Riemann--Liouville and Hadamard.
The application of our results to the recent fractional calculus of variations gives answer
to an open question posed in [Abstr. Appl. Anal. 2012, Art. ID 871912; doi:10.1155/2012/871912].
\end{abstract}

\emph{\textrm{Keywords:}} Calculus of variations; Existence of minimizers;
Necessary optimality condition; Euler--Lagrange equation; Fractional calculus.\\

\emph{\textrm{2010 Mathematics Subject Classification:}} 26A33; 49J05.

\tableofcontents


\section{Introduction}

The mathematical field that deals with derivatives of any real order is called fractional calculus.
For a long time, it was only considered as a pure mathematical branch. Nevertheless, during the last two decades,
fractional calculus has attracted the attention of many researchers and it has been successfully applied
in various areas like computational biology \cite{magi} or economy \cite{comt}. In particular, the first
and well-established application of fractional operators was in the physical context of anomalous diffusion,
see \cite{neel,neel2} for example. Here we can mention \cite{metz}, demonstrating that fractional equations
work as a complementary tool in the description of anomalous transport processes. Let us refer to \cite{hilf3}
for a general review of the applications of fractional calculus in several fields of Physics. In a more general
point of view, fractional differential equations are even considered as an alternative model to non-linear differential
equations, see \cite{boni}. \\

Recently, a subtopic of the fractional calculus gains importance: the calculus of variations with Lagrangian
functionals involving fractional derivatives. This leads to the statement of fractional Euler--Lagrange equations,
see \cite{alme,bale2,bale3}. This idea was introduced by Riewe in 1996-97 \cite{riew,riew2} in view of finding
fractional variational structures for non conservative differential equations. One can find a similar and more
conclusive reasoning in \cite{cres8,cres7}. For the state of the art on the fractional calculus of variations,
we refer the reader to the recent book \cite{torr5}. For optimal control problems with stochastic equations driven
by fractional noise, see \cite{dunc} and references therein. \\

Fractional Euler--Lagrange equations characterize the critical points of fractional Lagrangian functionals
and consequently, they are necessary optimality conditions for optimizers. Nevertheless, despite particular
results in \cite{jiao,klim}, no general existence results of an optimizer are provided in the literature.
This is a reason why we have provided in \cite{bour6,bour9} sufficient conditions ensuring the existence
of a minimizer for fractional Lagrangian functionals in the Riemann--Liouville and Caputo senses.
Let us remind that, in these two previous papers, the method developed is widely inspired from \cite{cesa,daco2}
where general existence results of a minimizer for classical Lagrangian functionals are provided. \\

There exist many notions of fractional integrals and derivatives. We can cite the notions of Riemann--Liouville,
Hadamard, Caputo and Gr\"unwald--Letnikov, see \cite{kilb,podl,samk}. In consequence, there exist a lot of versions
of fractional Euler--Lagrange equations. An unifying perspective to the subject is possible by considering general
linear operators, like kernel operators \cite{kiry,odzi,odzi2}. In \cite{odzi,odzi2}, authors are then interested
in the calculus of variations with Lagrangian functionals involving general operators. This leads to the statement
of generalized Euler--Lagrange equations. Unfortunately, once again, no general existence results are provided
for this unifying framework. \\

Our aim in this paper is then to give sufficient conditions ensuring the existence of a minimizer for generalized
Lagrangian functionals in the case of bounded-time intervals. We also prove a necessary optimality condition
of Euler--Lagrange type. Finally, we illustrate our results by special cases of general kernel operators and,
in particular, of fractional integrals (Riemann--Liouville and Hadamard). \\

The paper is organized as follows. In Section~\ref{section1}, sufficient conditions  ensuring the existence
of a minimizer for a generalized Lagrangian functional are derived. We first establish a Tonelli-type theorem
with general sufficient conditions in Section~\ref{section11}. Then, we give more concrete ones
in Sections~\ref{section12} and \ref{section13}. In Section~\ref{section2}, we prove a necessary optimality
condition of Euler--Lagrange type. Section~\ref{section3} is devoted to examples of general kernel operators.
In particular, we study the cases of fractional integrals of Riemann--Liouville (fixed and variable order)
and of Hadamard. Finally, in Section~\ref{section4}, we provide some improvements to the results
of Section~\ref{section1} by modifying some assumptions. In Section~\ref{section5} of conclusion,
we give some perspectives of possible generalizations.


\section{Existence of minimizers for a generalized Lagrangian functional}
\label{section1}

Let us consider $a < b$ two reals, let $d \in \N^*$ be the dimension and let $\Vert \cdot \Vert$
denote the usual Euclidean norm of $\R^d$. Let us denote by:
\begin{itemize}
\item $\CC := \CC ([a,b];\R^d)$ the usual space of continuous
functions endowed with its usual norm $\Vert \cdot \Vert_\infty$;
\item $\Cc := \Cc ([a,b];\R^d)$ the usual space of infinitely
differentiable functions compactly supported in $]a,b[$;
\end{itemize}
and, for any $1 \leq r \leq \infty$, let us denote by:
\begin{itemize}
\item $\L^r := \L^r (a,b;\R^d)$ the usual space of $r$-Lebesgue integrable
functions endowed with its usual norm $\Vert \cdot \Vert_{\L^r}$;
\item $\W^{1,r} := \W^{1,r} (a,b;\R^d)$ the usual $r$-Sobolev space endowed
with its usual norm $\Vert \cdot \Vert_{\W^{1,r}}$.
\end{itemize}
Let us remind that the compact embedding $\W^{1,r} \hooktwoheadrightarrow \CC$
holds for any $1 < r \leq \infty$, see \cite{brez} for a detailed proof. \\

In the whole paper, let us consider $1 < p < \infty$ (resp. $1 < q < \infty$)
and let $p'$ (resp. $q'$) denote the adjoint of $p$ (resp. $q$) \textit{i.e.}
$p'=p/(p-1)$ (resp. $q'=q/(q-1)$). In this section, our aim is to give sufficient
conditions ensuring the existence of a minimizer
for the following generalized Lagrangian functional:
\begin{equation}
\fonction{\LL}{\E}{\R}{u}{\di \int_a^b L(u,K[u],\dot{u},K[\dot{u}],t) \; dt ,}
\end{equation}
where $\E$ is a weakly closed subset of $\W^{1,p}$, $\dot{u}$ denotes the derivative of $u$,
$K$ is a linear bounded operator from $\L^p$ to $\L^q$ and $L$ is a Lagrangian of class $\CC^1$:
\begin{equation}
\fonction{L}{(\R^d)^4 \times [a,b]}{\R}{(x_1,x_2,x_3,x_4,t)}{L(x_1,x_2,x_3,x_4,t).}
\end{equation}
For any $i=1,2,3,4$, let us denote by $\partial_i L$ the partial derivative
of $L$ with respect to its $i$th variable. \\

Let us remind that, in this paper, $K$ is destined to play the role of a general kernel
operator and more precisely of a fractional integral
(Riemann--Liouville or Hadamard), see Section~\ref{section3}.


\subsection{A Tonelli-type theorem}
\label{section11}

In this section, we state a Tonelli-type theorem ensuring the existence of a minimizer
for $\LL$ with the help of general assumptions of regularity, coercivity and convexity.
These three hypothesis are usual in the classical case, see \cite{cesa,daco2}. Precisely:

\begin{definition}
\label{defreg}
A Lagrangian $L$ is said to be \emph{regular} if it satisfies:
\begin{itemize}
\item $L (u,K[u],\dot{u},K[\dot{u}],t) \in \L^1$;
\item $\partial_1 L (u,K[u],\dot{u},K[\dot{u}],t) \in \L^1$;
\item $\partial_2 L (u,K[u],\dot{u},K[\dot{u}],t) \in \L^{q'}$;
\item $\partial_3 L (u,K[u],\dot{u},K[\dot{u}],t) \in \L^{p'}$;
\item $\partial_4 L (u,K[u],\dot{u},K[\dot{u}],t) \in \L^{q'}$,
\end{itemize}
for any $u \in \W^{1,p}$.
\end{definition}

\begin{definition}
\label{defcoer}
A Lagrangian functional $\LL$ is said to be \emph{coercive} on $\E$ if it satisfies:
\begin{equation}
\lim\limits_{\substack{\Vert u \Vert_{\W^{1,p}} \to \infty \\ u \in \E }} \LL (u) = +\infty.
\end{equation}
\end{definition}

We are now in position to state the following general result:

\begin{theorem}[Tonelli-type theorem]
\label{thmtonelli}
Let us assume that:
\begin{itemize}
\item $L$ is regular;
\item $\LL$ is coercive on $\E$;
\item $L(\cdot,t)$ is convex on $(\R^d)^4$ for any $t \in [a,b]$.
\end{itemize}
Then, there exists a minimizer for $\LL$ on $\E$.
\end{theorem}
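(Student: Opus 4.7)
The plan is to apply the direct method of the calculus of variations on $\E$. First, let $m := \inf_{u \in \E} \LL(u) \in [-\infty,+\infty)$ and pick a minimizing sequence $(u_n) \subset \E$ with $\LL(u_n) \to m$. The coercivity assumption immediately yields that $(u_n)$ is bounded in $\W^{1,p}$, for otherwise $\LL(u_n)$ would diverge to $+\infty$ along a subsequence. Reflexivity of $\W^{1,p}$ (valid since $1<p<\infty$) lets us extract a weakly convergent subsequence $u_n \rightharpoonup u$ in $\W^{1,p}$, with $u \in \E$ by the weak closedness of $\E$. The compact embedding $\W^{1,p} \hooktwoheadrightarrow \CC$ upgrades this to uniform convergence $u_n \to u$ on $[a,b]$. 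Moreover $\dot u_n \rightharpoonup \dot u$ weakly in $\L^p$, and since $K:\L^p \to \L^q$ is linear and bounded hence weakly continuous, we obtain $K[u_n] \rightharpoonup K[u]$ and $K[\dot u_n] \rightharpoonup K[\dot u]$ weakly in $\L^q$.

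The core of the argument is the weak lower-semicontinuity estimate $\liminf_n \LL(u_n) \geq \LL(u)$. Since $L(\cdot,t)$ is $\CC^1$ and convex on $(\R^d)^4$, the subgradient inequality gives, for almost every $t \in [a,b]$,
\[
L(u_n,K[u_n],\dot u_n,K[\dot u_n],t) \geq L(u,K[u],\dot u,K[\dot u],t) + \sum_{i=1}^4 \partial_i L(u,K[u],\dot u,K[\dot u],t) \cdot \Delta_i^n(t),
\]
where $\Delta_1^n = u_n-u$, $\Delta_2^n = K[u_n]-K[u]$, $\Delta_3^n = \dot u_n-\dot u$ and $\Delta_4^n = K[\dot u_n]-K[\dot u]$. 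After integration over $[a,b]$, it suffices to show that each of the four cross terms tends to $0$. Definition \ref{defreg} places the partial derivatives $\partial_1 L,\partial_2 L,\partial_3 L,\partial_4 L$ evaluated at the limit configuration respectively in $\L^1,\L^{q'},\L^{p'},\L^{q'}$, which pair by duality with $\L^\infty,\L^q,\L^p,\L^q$. The first term vanishes because $\Delta_1^n \to 0$ in $\CC$ hence in $\L^\infty$; the third because $\Delta_3^n \rightharpoonup 0$ in $\L^p$; the second and fourth because $\Delta_2^n,\Delta_4^n \rightharpoonup 0$ in $\L^q$. This yields the desired inequality.

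The main obstacle is precisely this lower-semicontinuity step: the regularity hypothesis is tuned exactly so that each of the four duality pairings makes sense against the available mode of convergence (uniform for $u_n$, weak $\L^p$ for $\dot u_n$, weak $\L^q$ for $K[u_n]$ and $K[\dot u_n]$), and weakening any of those integrability conditions would break the argument. Once this is in hand, one concludes in the classical way: $\LL(u) \leq \liminf_n \LL(u_n) = m$, so necessarily $m > -\infty$ and $u$ realizes the infimum, \emph{i.e.}\ is a minimizer of $\LL$ on $\E$.
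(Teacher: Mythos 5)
Your proposal is correct and follows essentially the same route as the paper: the direct method (coercivity, reflexivity, weak closedness) combined with the convexity subgradient inequality, where the regularity hypotheses are used exactly to pair each $\partial_i L$ at the limit against the corresponding mode of convergence of the increments. The only cosmetic difference is that the paper records the strong $\L^q$ convergence of $K[u_n]$ (via the compact embedding), whereas you use only weak convergence, which suffices for the duality pairing.
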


\begin{proof}
Since $L$ is regular, $L (u,K[u],\dot{u},K[\dot{u}],t) \in \L^1$ and then $\LL (u)$
exists in $\R$ for any $u \in \E$. Let us introduce a minimizing sequence
$(u_n)_{n \in \N} \subset \E$ satisfying:
\begin{equation}
\LL (u_n) \longrightarrow \inf\limits_{u \in \E} \LL (u) < +\infty.
\end{equation}
Since $\LL$ is coercive, $(u_n)_{n \in \N}$ is bounded in $\W^{1,p}$. Since $\W^{1,p}$
is a reflexive Banach space, it exists a subsequence of $(u_n)_{n \in \N}$
weakly convergent in $\W^{1,p}$. In the following, we still denote this subsequence
by $(u_n)_{n \in \N}$ and we denote by $\bar{u}$ its weak limit. Since $\E$
is a weakly closed subset of $\W^{1,p}$, $\bar{u} \in \E$. Finally,
using the convexity of $L$, we have for any $n \in \N$:
\begin{equation}
\label{eq00}
\LL (u_n) \geq \LL (\bar{u}) + \di \int_a^b \partial_1 L \cdot (u_n-\bar{u})
+ \partial_2 L \cdot \left(K[u_n]-K[\bar{u}]\right) + \partial_3 L
\cdot (\dot{u}_n-\dot{\bar{u}}) + \partial_4 L \cdot (K[\dot{u}_n]-K[\dot{\bar{u}}])  \; dt,
\end{equation}
where $\partial_i L$ are taken in $(\bar{u},K[\bar{u}],\dot{\bar{u}},K[\dot{\bar{u}}],t)$ for any $i=1,2,3,4$. \\

Now, from these four following facts:
\begin{itemize}
\item $L$ is regular;
\item $u_n \xrightharpoonup[]{\W^{1,p}} \bar{u}$;
\item $K$ is linear bounded from $\L^p$ to $\L^q$;
\item the compact embedding $\W^{1,p} \hooktwoheadrightarrow \CC$ holds;
\end{itemize}
one can easily conclude that:
\begin{itemize}
\item $\partial_3 L(\bar{u},K[\bar{u}],\dot{\bar{u}},K[\dot{\bar{u}}],t)
\in \L^{p'}$ and $\dot{u_n} \xrightharpoonup[]{\L^p} \dot{\bar{u}}$;
\item $\partial_4 L(\bar{u},K[\bar{u}],\dot{\bar{u}},K[\dot{\bar{u}}],t)
\in \L^{q'}$ and $K[\dot{u_n}] \xrightharpoonup[]{\L^q} K[\dot{\bar{u}}]$;
\item $\partial_1 L(\bar{u},K[\bar{u}],\dot{\bar{u}},K[\dot{\bar{u}}],t)
\in \L^1$ and $u_n \xrightarrow[]{\L^\infty} \bar{u}$;
\item $\partial_2 L(\bar{u},K[\bar{u}],\dot{\bar{u}},K[\dot{\bar{u}}],t)
\in \L^{q'}$ and $K[u_n] \xrightarrow[]{\L^q} K[\bar{u}]$.
\end{itemize}
Finally, taking $n \to \infty$ in inequality \eqref{eq00}, we obtain:
\begin{equation}
\inf\limits_{u \in \E} \LL (u) \geq \LL (\bar{u}) \in \R,
\end{equation}
which completes the proof.
\end{proof}

The first two hypothesis of Theorem~\ref{thmtonelli} are very general. Consequently,
in Sections~\ref{section12} and \ref{section13}, we give concrete assumptions
on $L$ ensuring its regularity and the coercivity of $\LL$. \\

The last hypothesis of convexity is strong. Nevertheless, from more regularity assumptions
on $L$ and on $K$, we prove in Section~\ref{section4} that we can provide versions
of Theorem~\ref{thmtonelli} with weaker convexity assumptions.


\subsection{Sufficient condition for a regular Lagrangian $L$}
\label{section12}

In this section, we give a sufficient condition on $L$ implying its regularity.
First, for any $M \geq 1$, let us define the set $\PP_M$ of maps
$\fonctionsansdef{P}{(\R^d)^4 \times [a,b]}{\R^+}$ such that for any
$(x_1,x_2,x_3,x_4,t) \in (\R^d)^4 \times [a,b]$:
\begin{equation}
P(x_1,x_2,x_3,x_4,t) = \di \sum_{k=0}^{N} c_k(x_1,t) \Vert x_2
\Vert^{d_{2,k}} \Vert x_3 \Vert^{d_{3,k}} \Vert x_4 \Vert^{d_{4,k}},
\end{equation}
with $ N \in \N$ and where, for any $k=0,\ldots,N$, $\fonctionsansdef{c_k}{\R^d \times [a,b]}{\R^+}$
is continuous and $(d_{2,k},d_{3,k},d_{4,k}) \in [0,q] \times [0,p] \times [0,q]$ satisfies
$d_{2,k}+ (q/p) d_{3,k} + d_{4,k} \leq (q/M)$. \\

The following lemma shows the interest of sets $\PP_M$:

\begin{lemma}
\label{lemregP}
Let $M \geq 1$ and $P \in \PP_M$. Then:
\begin{equation}
\forall u \in \W^{1,p}, \; P(u,K[u],\dot{u},K[\dot{u}],t) \in \L^{M}.
\end{equation}
\end{lemma}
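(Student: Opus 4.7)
The strategy is straightforward: split $P$ into its summands, control the continuous coefficient $c_k(u,\cdot)$ via the Sobolev embedding, and reduce the problem to a single application of Hölder's inequality whose compatibility condition is exactly the hypothesis on the exponents.

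By the triangle inequality in $\L^M$ it suffices to treat a single summand
\[ f_k(t) := c_k(u(t),t)\,\|K[u](t)\|^{d_{2,k}}\,\|\dot{u}(t)\|^{d_{3,k}}\,\|K[\dot{u}](t)\|^{d_{4,k}}. \]
The compact embedding $\W^{1,p}\hooktwoheadrightarrow\CC$ (valid because $p>1$) gives $u \in \CC$, so the image of $u$ is contained in a compact subset of $\R^d$; continuity of $c_k$ then yields a uniform bound $c_k(u(t),t) \leq C_k$ on $[a,b]$. Moreover, boundedness of $K:\L^p\to\L^q$ combined with $u,\dot{u}\in\L^p$ gives $K[u],K[\dot{u}]\in\L^q$. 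The task therefore reduces to bounding
\[ \int_a^b \|K[u](t)\|^{Md_{2,k}}\,\|\dot{u}(t)\|^{Md_{3,k}}\,\|K[\dot{u}](t)\|^{Md_{4,k}}\,dt. \]

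I apply Hölder's inequality with exponents $\alpha_2 := q/(Md_{2,k})$, $\alpha_3 := p/(Md_{3,k})$, $\alpha_4 := q/(Md_{4,k})$, omitting any factor whose $d_{i,k}$ vanishes. The hypothesis $d_{2,k}+(q/p)d_{3,k}+d_{4,k} \leq q/M$ rewrites exactly as
\[ \frac{1}{\alpha_2}+\frac{1}{\alpha_3}+\frac{1}{\alpha_4} = \frac{M}{q}\Bigl(d_{2,k}+\tfrac{q}{p}d_{3,k}+d_{4,k}\Bigr) \leq 1, \]
and each $\alpha_i \geq 1$ since the same constraint forces $d_{2,k}, d_{4,k}\leq q/M$ and $d_{3,k}\leq p/M$. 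Hölder on the finite-measure space $[a,b]$ then yields the finite bound $(b-a)^{\beta}\,\|K[u]\|_{\L^q}^{Md_{2,k}}\,\|\dot{u}\|_{\L^p}^{Md_{3,k}}\,\|K[\dot{u}]\|_{\L^q}^{Md_{4,k}}$, where $\beta := 1 - \sum_i 1/\alpha_i \geq 0$. I do not expect any serious obstacle: the algebraic condition defining $\PP_M$ is engineered precisely to match the Hölder compatibility condition dictated by the mapping properties of $K$ and by the regularity $\dot{u}\in\L^p$; the only piece of bookkeeping is to omit trivial factors when some $d_{i,k}$ vanishes, which would formally send the corresponding $\alpha_i$ to $+\infty$.
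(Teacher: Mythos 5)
Your proof is correct and follows essentially the same route as the paper's: both arguments rest on the generalized H\"older inequality, with the membership $K[u],K[\dot u]\in\L^q$, $\dot u\in\L^p$ and the boundedness of $c_k(u(\cdot),\cdot)$ (via continuity of $u$), the exponent condition $d_{2,k}+(q/p)d_{3,k}+d_{4,k}\leq q/M$ being exactly the H\"older compatibility requirement. The only cosmetic difference is that you apply H\"older to the $M$-th power directly, while the paper places the product in $\L^r$ with $r\geq M$ and then uses $\L^r\subset\L^M$ on the bounded interval; these are the same computation.
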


\begin{proof}
For any $k=0,\ldots,N$, $c_k(u,t)$ is continuous and then is in $\L^\infty$. Furthermore,
$\Vert K[u] \Vert^{d_{2,k}} \in \L^{q/d_{2,k}}$, $\Vert \dot{u} \Vert^{d_{3,k}} \in \L^{p/d_{3,k}}$
and $\Vert K[\dot{u}] \Vert^{d_{4,k}} \in \L^{q/d_{4,k}}$. Consequently:
\begin{equation}
c_k(u,t) \Vert K[u] \Vert^{d_{2,k}}  \Vert \dot{u} \Vert^{d_{3,k}}
\Vert  K[\dot{u}] \Vert^{d_{4,k}}  \in \L^r,
\end{equation}
with $r=q/(d_{2,k} + (q/p) d_{3,k} +d_{4,k}) \geq M$. The proof is complete.
\end{proof}

Finally, from Lemma~\ref{lemregP}, one can easily obtain the following proposition:

\begin{proposition}
\label{prop1}
If there exist $P_0 \in \PP_{1}$, $P_1 \in \PP_{1}$, $P_2 \in \PP_{q'}$,
$P_3 \in \PP_{p'}$ and $P_4 \in \PP_{q'}$ such that:
\begin{itemize}
\item $ \vert L(x_1,x_2,x_3,x_4,t) \vert \leq P_0(x_1,x_2,x_3,x_4,t) $;
\item $ \Vert \partial_1 L(x_1,x_2,x_3,x_4,t) \Vert \leq P_1(x_1,x_2,x_3,x_4,t) $;
\item $ \Vert \partial_2 L(x_1,x_2,x_3,x_4,t) \Vert \leq P_2(x_1,x_2,x_3,x_4,t) $;
\item $ \Vert \partial_3 L(x_1,x_2,x_3,x_4,t) \Vert \leq P_3(x_1,x_2,x_3,x_4,t) $;
\item $ \Vert \partial_4 L(x_1,x_2,x_3,x_4,t) \Vert \leq P_4(x_1,x_2,x_3,x_4,t) $,
\end{itemize}
for any $(x_1,x_2,x_3,x_4,t) \in (\R^d)^4 \times [a,b]$, then $L$ is regular.
\end{proposition}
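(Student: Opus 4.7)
The plan is to read the proposition off as an immediate packaging of Lemma~\ref{lemregP}. Fix an arbitrary $u \in \W^{1,p}$; Definition~\ref{defreg} requires five integrability statements, namely that $L(u,K[u],\dot u,K[\dot u],t) \in \L^1$, $\partial_1 L(\cdots) \in \L^1$, $\partial_2 L(\cdots) \in \L^{q'}$, $\partial_3 L(\cdots) \in \L^{p'}$ and $\partial_4 L(\cdots) \in \L^{q'}$. So the task is just to verify these five conditions in turn.

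First I would invoke Lemma~\ref{lemregP} on each of the five hypothesised majorants evaluated at $(u,K[u],\dot u,K[\dot u],t)$. Since $P_0, P_1 \in \PP_1$, the lemma places the corresponding compositions in $\L^1$; since $P_2, P_4 \in \PP_{q'}$, it places them in $\L^{q'}$; and since $P_3 \in \PP_{p'}$ it places the remaining one in $\L^{p'}$. Then the pointwise bounds $|L| \leq P_0$, $\|\partial_i L\| \leq P_i$ transfer integrability by the elementary observation that a measurable function dominated almost everywhere by a function in $\L^M$ is itself in $\L^M$. Chaining these two observations five times yields precisely the list in Definition~\ref{defreg}.

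The only mildly nontrivial point is the measurability of each composite on $[a,b]$. Since $L$ is of class $\CC^1$, both $L$ and each $\partial_i L$ are continuous jointly in $(x_1,x_2,x_3,x_4,t)$; the inner tuple $(u, K[u], \dot u, K[\dot u])$ is Lebesgue-measurable because $u, \dot u \in \L^p$ and $K[u], K[\dot u] \in \L^q$ (the latter since $K$ maps $\L^p$ into $\L^q$), so the compositions are measurable. There is essentially no obstacle: the entire proposition is a bookkeeping exercise that pairs each of the five Lebesgue requirements in Definition~\ref{defreg} with the matching set $\PP_M$ supplied by Lemma~\ref{lemregP}.
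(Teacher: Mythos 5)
Your proof is correct and follows exactly the route the paper intends: the paper itself states that Proposition~\ref{prop1} is "easily obtained" from Lemma~\ref{lemregP}, and your argument (apply the lemma to each majorant $P_i$, then transfer integrability through the pointwise domination, with the measurability of the composites noted) is precisely that bookkeeping made explicit.
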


This last proposition states that if the norms of $L$ and of its partial derivatives
are controlled from above by elements of $\PP_M$, then $L$ is regular.
We will see some examples in Section~\ref{section14}.


\subsection{Sufficient condition for a coercive Lagrangian functional $\LL$}
\label{section13}

The definition of coercivity for a Lagrangian functional $\LL$ is strongly dependent
on the considered set $\E$. Consequently, in this section, we will consider an example
of set $\E$ and we will give a sufficient condition
on $L$ ensuring the coercivity of $\LL$ in this case. \\

Precisely, let us consider $u_0 \in \R^d$ and $\E = \W^{1,p}_a$ where
$\W^{1,p}_a := \{ u \in \W^{1,p}, \; u(a)=u_0 \}$. From the compact embedding
$\W^{1,p} \hooktwoheadrightarrow \CC $, $\W^{1,p}_a$ is a weakly closed subset of $\W^{1,p}$. \\

An important consequence of such a choice of set $\E$ is given by the following lemma:

\begin{lemma}
\label{lemdom}
There exist $A_0$, $A_1 \geq 0$ such that for any $u \in \W^{1,p}_a$:
\begin{itemize}
\item $\Vert u \Vert_{\L^\infty} \leq A_0 \Vert \dot{u} \Vert_{\L^p} + A_1$;
\item $\Vert K[u] \Vert_{\L^q} \leq A_0 \Vert \dot{u} \Vert_{\L^p} + A_1$;
\item $\Vert K[\dot{u}] \Vert_{\L^q} \leq A_0 \Vert \dot{u} \Vert_{\L^p} + A_1$.
\end{itemize}
\end{lemma}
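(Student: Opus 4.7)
The proof reduces to three routine estimates, after which I would take $A_0$ and $A_1$ to be the largest of the constants obtained, so that a single pair of constants works for all three inequalities.

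The plan is first to exploit the boundary condition in $\W^{1,p}_a$. For any $u \in \W^{1,p}_a$ and any $t \in [a,b]$, the fundamental theorem of calculus gives $u(t) = u_0 + \int_a^t \dot{u}(s)\,ds$, and H\"older's inequality yields $\int_a^t \Vert \dot{u}(s) \Vert\,ds \leq (b-a)^{1/p'} \Vert \dot{u} \Vert_{\L^p}$. Taking the supremum over $t$ delivers the first bound with constants $(b-a)^{1/p'}$ and $\Vert u_0 \Vert$.

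Next, since $K$ is a linear bounded operator from $\L^p$ to $\L^q$, denote its operator norm by $\Vert K \Vert$. The third bound follows immediately from $\Vert K[\dot{u}] \Vert_{\L^q} \leq \Vert K \Vert \cdot \Vert \dot{u} \Vert_{\L^p}$.

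For the second bound, I would combine the two previous ideas: the continuous embedding $\L^\infty \hookrightarrow \L^p$ on a bounded interval gives $\Vert u \Vert_{\L^p} \leq (b-a)^{1/p} \Vert u \Vert_{\L^\infty}$, so that
\begin{equation*}
\Vert K[u] \Vert_{\L^q} \leq \Vert K \Vert \cdot \Vert u \Vert_{\L^p} \leq \Vert K \Vert (b-a)^{1/p}\bigl((b-a)^{1/p'} \Vert \dot{u} \Vert_{\L^p} + \Vert u_0 \Vert\bigr),
\end{equation*}
which is of the required affine form in $\Vert \dot{u} \Vert_{\L^p}$. Setting $A_0$ and $A_1$ to be the maximum of the three pairs of constants obtained, all three inequalities hold simultaneously.

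There is no genuine obstacle here: the proof is a bookkeeping exercise combining the pointwise identity $u(t)=u_0+\int_a^t \dot u$, H\"older's inequality, and the boundedness of $K$ on $\L^p$. The only point to watch is that the constants $A_0,A_1$ must be chosen uniformly for the three estimates, which is resolved simply by taking maxima.
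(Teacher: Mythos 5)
Your proof is correct and follows essentially the same route as the paper's: the fundamental theorem of calculus plus H\"older for the $\L^\infty$ bound, the operator norm of $K$ for the third bound, and the embedding $\L^\infty \hookrightarrow \L^p$ on a bounded interval combined with boundedness of $K$ for the second, finishing by taking maxima of the constants. No gaps.
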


\begin{proof}
The last inequality comes from the boundedness of $K$. Let us consider the second one.
For any $u \in \W^{1,p}_a$, we have $\Vert u \Vert_{\L^p} \leq \Vert u-u_0 \Vert_{\L^p}
+ \Vert u_0 \Vert_{\L^p} \leq (b-a) \Vert \dot{u} \Vert_{\L^p} + (b-a)^{1/p} \Vert u_0 \Vert$.
We conclude using again the boundedness of $K$. Now, let us consider the first inequality.
For any $u \in \W^{1,p}_a$, we have $\Vert u \Vert_{\L^\infty} \leq \Vert u-u_0 \Vert_{\L^\infty}
+ \Vert u_0 \Vert \leq \Vert \dot{u} \Vert_{\L^1} + \Vert u_0 \Vert \leq (b-a)^{1/p'} \Vert \dot{u}
\Vert_{\L^p} + \Vert u_0 \Vert$. Finally, we have just to define $A_0$ and $A_1$ as the maxima
of the appearing constants. The proof is complete.
\end{proof}

Precisely, this lemma states the \textit{affine domination} of the term $\Vert \dot{u} \Vert_{\L^p}$
on the terms $\Vert u \Vert_{\L^\infty}$, $\Vert K[u] \Vert_{\L^q}$ and $\Vert K[\dot{u}] \Vert_{\L^q}$
for any $u \in \W^{1,p}_a$. This characteristic of $\W^{1,p}_a$ leads us to give the following sufficient
condition for a coercive Lagrangian functional $\LL$:

\begin{proposition}
\label{prop1b}
Assume that for any $(x_1,x_2,x_3,x_4,t) \in (\R^d)^4 \times [a,b]$:
\begin{equation}
L(x_1,x_2,x_3,x_4,t) \geq c_0 \Vert x_3 \Vert^{p} + \di \sum_{k=1}^{N} c_k
\Vert x_1 \Vert^{d_{1,k}} \Vert x_2 \Vert^{d_{2,k}}
\Vert x_3 \Vert^{d_{3,k}} \Vert x_4 \Vert^{d_{4,k}},
\end{equation}
with $c_0 > 0$ and $N \in \N^*$ and where, for any $k=1,\ldots,N$, $c_k \in \R$
and $(d_{1,k},d_{2,k},d_{3,k},d_{4,k}) \in \R^+ \times [0,q] \times [0,p] \times [0,q]$ satisfies:
\begin{equation}
d_{2,k}+(q/p)d_{3,k}+d_{4,k} \leq q \quad \text{and} \quad d_{1,k}+d_{2,k}+d_{3,k}+d_{4,k} < p.
\end{equation}
Then $\LL$ is coercive on $\W^{1,p}_a$.
\end{proposition}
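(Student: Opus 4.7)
\textbf{Plan of proof for Proposition~\ref{prop1b}.}
The plan is to bound $\LL(u)$ from below by a function of $\Vert \dot u \Vert_{\L^p}$ that dominates every lower-order term, then transfer coercivity in $\Vert \dot u \Vert_{\L^p}$ to coercivity in $\Vert u \Vert_{\W^{1,p}}$ using the fixed boundary condition $u(a)=u_0$.

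First I would integrate the pointwise lower bound on $L$ to get
\[
\LL(u) \;\geq\; c_0 \Vert \dot u \Vert_{\L^p}^p \;+\; \sum_{k=1}^N c_k \int_a^b \Vert u \Vert^{d_{1,k}} \Vert K[u] \Vert^{d_{2,k}} \Vert \dot u \Vert^{d_{3,k}} \Vert K[\dot u] \Vert^{d_{4,k}} \, dt .
\]
For each $k$, the condition $d_{2,k}+(q/p)d_{3,k}+d_{4,k}\leq q$ is exactly what makes H\"older's inequality work, as in the proof of Lemma~\ref{lemregP}: the factors $\Vert K[u]\Vert^{d_{2,k}}$, $\Vert\dot u\Vert^{d_{3,k}}$, $\Vert K[\dot u]\Vert^{d_{4,k}}$ belong to $\L^{q/d_{2,k}}$, $\L^{p/d_{3,k}}$, $\L^{q/d_{4,k}}$ respectively, and their product lies in $\L^{r_k}$ with $1/r_k = (d_{2,k}+(q/p)d_{3,k}+d_{4,k})/q\leq 1$. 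Together with $\Vert u\Vert^{d_{1,k}}\in \L^\infty$, H\"older then gives a bound of the form
\[
\Bigl| \int_a^b \Vert u \Vert^{d_{1,k}} \Vert K[u] \Vert^{d_{2,k}} \Vert \dot u \Vert^{d_{3,k}} \Vert K[\dot u] \Vert^{d_{4,k}} \, dt \Bigr| \;\leq\; C_k \Vert u \Vert_{\L^\infty}^{d_{1,k}} \Vert K[u] \Vert_{\L^q}^{d_{2,k}} \Vert \dot u \Vert_{\L^p}^{d_{3,k}} \Vert K[\dot u] \Vert_{\L^q}^{d_{4,k}} .
\]

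Next I would invoke Lemma~\ref{lemdom} to replace each of $\Vert u\Vert_{\L^\infty}$, $\Vert K[u]\Vert_{\L^q}$ and $\Vert K[\dot u]\Vert_{\L^q}$ by $A_0\Vert \dot u\Vert_{\L^p}+A_1$. Elementary estimates of the form $(A_0 x+A_1)^d\leq C(1+x^d)$ for $x\geq 0$ and $d\geq 0$ then give
\[
\LL(u) \;\geq\; c_0 \Vert \dot u \Vert_{\L^p}^p - \sum_{k=1}^N \widetilde C_k \bigl( 1 + \Vert \dot u \Vert_{\L^p}^{s_k} \bigr) ,
\qquad s_k := d_{1,k}+d_{2,k}+d_{3,k}+d_{4,k} < p .
\]
Since $c_0>0$ and every $s_k<p$, the right-hand side tends to $+\infty$ as $\Vert \dot u\Vert_{\L^p}\to\infty$.

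Finally, to reach $\W^{1,p}$-coercivity on $\W^{1,p}_a$, I would note that the argument used in the proof of Lemma~\ref{lemdom} already gives $\Vert u \Vert_{\L^p}\leq (b-a)\Vert \dot u\Vert_{\L^p}+(b-a)^{1/p}\Vert u_0\Vert$, so that
\[
\Vert u \Vert_{\W^{1,p}} \;\leq\; (1+b-a)\Vert \dot u \Vert_{\L^p} + (b-a)^{1/p}\Vert u_0\Vert ,
\]
and therefore $\Vert u\Vert_{\W^{1,p}}\to\infty$ forces $\Vert \dot u\Vert_{\L^p}\to\infty$, yielding $\LL(u)\to+\infty$. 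The only mildly delicate point is the H\"older bookkeeping in the second step; the rest is routine, and in particular no hypothesis on the signs of the $c_k$ for $k\geq 1$ is needed, since the negative terms are subordinate to $c_0\Vert\dot u\Vert_{\L^p}^p$ via the strict inequality $s_k<p$.
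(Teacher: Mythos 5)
Your proposal is correct and follows essentially the same route as the paper's proof: the same H\"older estimate with exponent $r_k=q/(d_{2,k}+(q/p)d_{3,k}+d_{4,k})$, the same appeal to Lemma~\ref{lemdom} to reduce everything to powers of $\Vert\dot u\Vert_{\L^p}$ with total degree $s_k<p$, and the same final equivalence between $\Vert\dot u\Vert_{\L^p}\to\infty$ and $\Vert u\Vert_{\W^{1,p}}\to\infty$ on $\W^{1,p}_a$. You merely spell out a few steps the paper leaves implicit.
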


\begin{proof}
Let us define $r_k = q/(d_{2,k}+(q/p)d_{3,k}+d_{4,k}) \geq 1$ and let $r'_k$ denote the adjoint
of $r_k$ \textit{i.e.} $r'_k = r_k/(r_k - 1)$. Using H\"older's inequality,
one can easily prove that, for any $u \in \W^{1,p}_a$, we have:
\begin{equation}
\LL (u) \geq c_0 \Vert \dot{u} \Vert_{\L^p}^p - \di \sum_{k=1}^N \vert
c_k \vert (b-a)^{1/r_k'} \Vert u \Vert^{d_{1,k}}_{\L^\infty} \Vert K[u]
\Vert^{d_{2,k}}_{\L^q}  \Vert \dot{u} \Vert^{d_{3,k}}_{\L^p}
\Vert K[\dot{u}] \Vert^{d_{4,k}}_{\L^q}.
\end{equation}
From the \textit{affine domination} of $\Vert \dot{u} \Vert_{\L^p}$
(see Lemma~\ref{lemdom}) and from the assumption
$d_{1,k}+d_{2,k}+d_{3,k}+d_{4,k} < p$, we obtain that:
\begin{equation}
\lim\limits_{\substack{\Vert \dot{u} \Vert_{\L^{p}} \to \infty \\
u \in \W^{1,p}_a }} \LL (u) = +\infty.
\end{equation}
Finally, from Lemma~\ref{lemdom}, we also have in $\W^{1,p}_a$:
\begin{equation}
\Vert \dot{u} \Vert_{\L^{p}} \to \infty
\Longleftrightarrow \Vert u \Vert_{\W^{1,p}} \to \infty .
\end{equation}
Consequently, $\LL$ is coercive on $\W^{1,p}_a$. The proof is complete.
\end{proof}

In this section, we have studied the case where $\E$ is the weakly closed subset
of $\W^{1,p}$ satisfying the initial condition $u(a)=u_0$. For other examples
of set $\E$, let us note that all the results of this section are still valid when:
\begin{itemize}
\item $\E$ is the weakly closed subset of $\W^{1,p}$ satisfying a final condition in $t=b$;
\item $\E$ is the weakly closed subset of $\W^{1,p}$ satisfying two boundary conditions in $t=a$ and in $t=b$.
\end{itemize}

For more general examples of set $\E$, one has to deduce the following reasoning.
A structure of $\E$ implying the \textit{domination} of one of terms $u$, $K[u]$,
$\dot{u}$ or $K[\dot{u}]$ has to be associated to a Lagrangian controlled
from below by a map preserving this domination.


\subsection{Examples of Lagrangian $L$}
\label{section14}

In this section, we give several examples of a convex Lagrangian $L$ satisfying assumptions
of Propositions~\ref{prop1} and \ref{prop1b}. In consequence, they are examples of application
of Theorem~\ref{thmtonelli} in the case $\E = \W^{1,p}_a$.

\begin{example}
\label{ex1}
The most classical examples of a Lagrangian are the quadratic ones.
Let us consider the following one:
\begin{equation}
L(x_1,x_2,x_3,x_4,t) = c(t)+\dfrac{1}{2} \di \sum_{i=1}^4 \Vert x_i \Vert^2,
\end{equation}
where $\fonctionsansdef{c}{[a,b]}{\R}$ is of class $\CC^1$. One can easily check
that $L$ satisfies the assumptions of Propositions~\ref{prop1} and \ref{prop1b}
with $p =2$ and $q \geq 2$. Moreover, $L$ satisfies the convexity hypothesis
of Theorem~\ref{thmtonelli}. Consequently, for any linear operator $K$ bounded
from $L^2$ to $L^q$, one can conclude that there exists a minimizer
of $\LL$ defined on $\W^{1,2}_a$.
\end{example}

\begin{example}
\label{ex2}
Let us consider $p=2$ and $q \geq 2$ and let us still denote $L$ the Lagrangian defined
in Example~\ref{ex1}. To obtain a more general example, one can define a Lagrangian
$L_1$ from $L$ as a time-dependent homothetic transformation and/or translation of its variables. Precisely:
\begin{equation}
\label{eqtransf}
L_1(x_1,x_2,x_3,x_4,t) = L(c_1(t) x_1+ c^0_1(t),c_2(t) x_2
+ c^0_2(t),c_3(t) x_3+ c^0_3(t),c_4(t) x_4+ c^0_4(t),t),
\end{equation}
where $\fonctionsansdef{c_i}{[a,b]}{\R}$ and $\fonctionsansdef{c^0_i}{[a,b]}{\R^d}$
are of class $\CC^1$ for any $i=1,2,3,4$. In this case, $L_1$ also satisfies
the convexity hypothesis of Theorem~\ref{thmtonelli} and the assumptions
of Proposition~\ref{prop1}. Moreover, if $c_3$ is with values in $\R^+$,
then $L_1$ also satisfies the assumption of Proposition~\ref{prop1b}.
\end{example}

One should be careful: this last remark is not available in a more general context.
Precisely, if a general Lagrangian $L$ satisfies the convexity hypothesis
of Theorem~\ref{thmtonelli} and assumptions of Propositions~\ref{prop1}
and \ref{prop1b}, then a Lagrangian $L_1$ obtained by \eqref{eqtransf} also satisfies
the convexity hypothesis of Theorem~\ref{thmtonelli} and the assumptions
of Proposition~\ref{prop1}. Nevertheless, the assumption
of Proposition~\ref{prop1b} can be lost by this process.

\begin{example}
\label{ex3}
We can also study quasi-linear examples given by a Lagrangian of the type
\begin{equation}
L(x_1,x_2,x_3,x_4,t) = c(t)+\dfrac{1}{p} \Vert x_3 \Vert^p
+ \di \sum_{i=1}^4 f_i (t) \cdot x_i,
\end{equation}
where $\fonctionsansdef{c}{[a,b]}{\R}$ and for any $i=1,2,3,4$,
$\fonctionsansdef{f_i}{[a,b]}{\R^d}$ are of class $\CC^1$. In this case,
$L$ satisfies the assumptions of Propositions~\ref{prop1} and \ref{prop1b}
for any $1 < p < \infty$ and $1 < q < \infty$. Consequently, since $L$ satisfies
the convexity hypothesis of Theorem~\ref{thmtonelli}, for any linear operator
$K$ bounded from $L^p$ to $L^q$, one can conclude that there exists
a minimizer of $\LL$ defined on $\W^{1,p}_a$.
\end{example}

The most important constraint in order to apply Theorem~\ref{thmtonelli}
is the convexity hypothesis. This is the reason why the previous examples
concern convex quasi-polynomial Lagrangians. Nevertheless, in Section~\ref{section4},
we are going to provide some improved versions of Theorem~\ref{thmtonelli}
with weaker convexity assumptions. This will be allowed by more regularity hypothesis
on $L$ and/or on $K$. We refer to Section~\ref{section4} for more details.


\section{Necessary optimality condition for a minimizer}
\label{section2}

Throughout this section, we assume additionally that:
\begin{itemize}
\item $L$ satisfies the assumptions of Proposition~\ref{prop1}
(in particular, $L$ is regular and $\LL (u)$ exists in $\R$ for any $u \in \E$);
\item $\E$ satisfies the following condition:
\begin{equation}
\forall u \in \E, \; \forall v \in \Cc, \; \exists 0 < \varepsilon \leq 1,
\; \forall \vert h \vert \leq \varepsilon, \; u+hv \in \E.
\end{equation}
\end{itemize}
This last assumption is satisfied if $\E+\Cc \subset \E$
(for example $\E = \W^{1,p}_a$ in Section~\ref{section13}). \\

Let us remind that $\LL$ is said to be \emph{differentiable}
at a point $u \in \E$ in the $\Cc$-direction if the following map:
\begin{equation}
\fonction{D\LL (u)}{\Cc}{\R}{v}{D\LL (u)(v)
:= \lim\limits_{h \to 0} \dfrac{\LL (u+hv)- \LL(u)}{h}}
\end{equation}
is well-defined. In this case, $u$ is moreover said to be a \emph{critical point}
of $\LL$ (in the $\Cc$-direction sense) if $D\LL (u) = 0$. \\

We characterize the critical points of $\LL$ as the weak solutions
of a generalized Euler--Lagrange equation. In particular, a necessary condition
for a point $u \in \E$ to be a minimizer of $\LL$ is to be a weak solution
of this generalized Euler--Lagrange equation. \\

Let us precise that \textit{weak solution} has to be understood
as solution of the equation almost everywhere on $(a,b)$.


\subsection{Differentiability of $\LL$ in the $\Cc$-direction}

Before proving the differentiability of $\LL$
in the $\Cc$-direction, we state the following lemma:

\begin{lemma}
\label{lemdomP}
Let $M \geq 1$ and $P \in \PP_M$. Then, for any $u \in \E$ and any $v \in \Cc$,
it exists $g \in \L^M (a,b;\R^+)$ such that for any $ h \in [-\varepsilon,\varepsilon]$:
\begin{equation}
P(u+hv,K[u]+hK[v],\dot{u}+h\dot{v},K[\dot{u}]+hK[\dot{v}],t ) \leq g.
\end{equation}
\end{lemma}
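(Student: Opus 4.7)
The plan is to exploit the fact that every element of $\PP_M$ is a finite sum of "monomial" terms, and to bound each such term uniformly in $h \in [-\varepsilon, \varepsilon]$ by a fixed $t$-dependent function lying in $\L^M$, the dominating function $g$ being the sum of all these bounds.

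First I would handle the coefficient $c_k(u+hv, t)$. Because $u \in \W^{1,p} \hookrightarrow \CC$ and $v \in \Cc$, both are continuous and bounded on $[a,b]$; since $\varepsilon \leq 1$, the set
\begin{equation*}
\{\, u(t)+hv(t) \;:\; t \in [a,b],\; |h| \leq \varepsilon \,\}
\end{equation*}
lies in a bounded (hence relatively compact) subset of $\R^d$. Continuity of $c_k$ on the resulting compact subset of $\R^d \times [a,b]$ then furnishes a constant $C_k \geq 0$ with $c_k(u(t)+hv(t),t) \leq C_k$ uniformly in $(t,h)$.

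Next I would bound each norm factor using $|h| \leq 1$ and the elementary inequality $(\alpha+\beta)^d \leq 2^d(\alpha^d + \beta^d)$, valid for $\alpha,\beta \geq 0$ and $d \geq 0$. For instance,
\begin{equation*}
\Vert K[u]+hK[v] \Vert^{d_{2,k}} \leq 2^{d_{2,k}} \bigl( \Vert K[u] \Vert^{d_{2,k}} + \Vert K[v] \Vert^{d_{2,k}} \bigr),
\end{equation*}
and analogously for $\Vert \dot u + h\dot v\Vert^{d_{3,k}}$ and $\Vert K[\dot u] + hK[\dot v]\Vert^{d_{4,k}}$. Multiplying the three bounds and expanding yields, for each $k$, a finite sum of terms of the form $\Vert \alpha_1 \Vert^{d_{2,k}} \Vert \alpha_2 \Vert^{d_{3,k}} \Vert \alpha_3 \Vert^{d_{4,k}}$ with $\alpha_1 \in \{K[u], K[v]\}$, $\alpha_2 \in \{\dot u, \dot v\}$, $\alpha_3 \in \{K[\dot u], K[\dot v]\}$ --- all $h$-independent functions of $t$.

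Finally, to each such monomial I would apply the argument already used in the proof of Lemma~\ref{lemregP}: since $K[u], K[v] \in \L^q$, $\dot u, \dot v \in \L^p$, and $K[\dot u], K[\dot v] \in \L^q$ (the last two by boundedness of $K$), Hölder's inequality places the product in $\L^r$ with $r = q/(d_{2,k}+(q/p)d_{3,k}+d_{4,k}) \geq M$, and because $(a,b)$ has finite measure we have $\L^r \hookrightarrow \L^M$. Defining $g$ as the finite sum of these monomials, each weighted by $C_k \cdot 2^{d_{2,k}+d_{3,k}+d_{4,k}}$ and summed over $k = 0,\ldots,N$, gives the required element of $\L^M(a,b;\R^+)$. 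No step presents a real obstacle here; the only delicate point is making the bound on the continuous coefficient $c_k$ uniform in $h$, which is settled by the compactness observation above.
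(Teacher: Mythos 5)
Your proof is correct and follows essentially the same route as the paper's: bound $c_k(u+hv,t)$ uniformly in $(t,h)$ by continuity/compactness, absorb $h$ via $|h|\leq\varepsilon\leq 1$ and the inequality $(\alpha+\beta)^d\leq 2^d(\alpha^d+\beta^d)$, and then place the resulting $h$-independent majorant in $\L^M$ by the same H\"older/exponent computation as in Lemma~\ref{lemregP}. The only cosmetic difference is that you expand the product of the three sums into individual monomials before applying H\"older, whereas the paper treats the product of the three bracketed sums directly; the content is identical.
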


\begin{proof}
Indeed, for any $k=0,\ldots,N$, for almost all $t \in (a,b)$
and for any $ h \in [-\varepsilon,\varepsilon]$, we have:
\begin{multline}
\label{eqproof}
c_k(u(t)+hv(t),t) \Vert K[u](t) + h K[v](t) \Vert^{d_{2,k}} \Vert \dot{u}(t)
+h \dot{v}(t) \Vert^{d_{3,k}} \Vert K[\dot{u}](t) + h K[\dot{v}](t) \Vert^{d_{4,k}}\\
\leq \bar{c}_k  (\underbrace{\Vert K[u](t) \Vert^{d_{2,k}}
+ \Vert K[v](t) \Vert^{d_{2,k}}}_{\in \L^{q/d_{2,k}}})(\underbrace{\Vert \dot{u}(t) \Vert^{d_{3,k}}
+ \Vert \dot{v}(t) \Vert^{d_{3,k}}}_{\in \L^{p/d_{3,k}}})(\underbrace{\Vert K[\dot{u}](t) \Vert^{d_{4,k}}
+ \Vert K[\dot{v}](t) \Vert^{d_{4,k}}}_{\in \L^{q/d_{4,k}}}),
\end{multline}
where $\bar{c}_k = 2^{d_{2,k}+d_{3,k}+d_{4,k}} \max\limits_{[a,b]\times[-\varepsilon,\varepsilon]}
c_k(u(t)+hv(t),t)$ exists in $\R$ because $c_k$, $u$ and $v$ are continuous.
Since $d_{2,k} + (q/p) d_{3,k} +d_{4,k} \leq (q/M)$, the right-hand side of inequality \eqref{eqproof}
is in $\L^M (a,b;\R^+)$ and is independent of $h$. The proof is complete.
\end{proof}

From this previous result, we can prove:

\begin{proposition}
\label{prop2}
Let us assume that $L$ satisfies the assumptions of Proposition~\ref{prop1}. Then,
$\LL$ is differentiable in the $\Cc$-direction at any point $u \in \E$. Moreover:
\begin{equation}
\forall u \in \E, \; \forall v \in \Cc, \; D\LL (u)(v)=\di \int_a^b \partial_1 L \cdot v
+ \partial_2 L \cdot K[v] + \partial_3 L \cdot \dot{v} + \partial_4 L \cdot K[\dot{v}] \; dt,
\end{equation}
where $\partial_i L$ are taken in $(u,K[u],\dot{u},K[\dot{u}],t)$ for any $i=1,2,3,4$.
\end{proposition}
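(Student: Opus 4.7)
The plan is to show differentiability by identifying the natural candidate for $D\LL(u)(v)$, proving pointwise convergence of the difference quotient, and applying the Lebesgue dominated convergence theorem; the earlier Lemma~\ref{lemdomP} is custom-built for exactly this domination step.

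First, fix $u \in \E$ and $v \in \Cc$, and take $\varepsilon \in (0,1]$ from the assumption on $\E$. Using the linearity of $K$, write
\begin{equation*}
\frac{\LL(u+hv)-\LL(u)}{h} = \int_a^b \frac{L(u+hv,K[u]+hK[v],\dot u+h\dot v,K[\dot u]+hK[\dot v],t) - L(u,K[u],\dot u,K[\dot u],t)}{h}\, dt
\end{equation*}
for $|h| \leq \varepsilon$. Since $L$ is of class $\CC^1$, for almost every $t$ the integrand converges as $h \to 0$ to
\begin{equation*}
\partial_1 L \cdot v + \partial_2 L \cdot K[v] + \partial_3 L \cdot \dot v + \partial_4 L \cdot K[\dot v],
\end{equation*}
with all partial derivatives evaluated at $(u,K[u],\dot u,K[\dot u],t)$. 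This identifies the candidate formula; it remains only to justify passing to the limit under the integral.

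For the domination, I would apply the mean value theorem in the integral form
\begin{equation*}
\frac{L(y+h\Delta,t)-L(y,t)}{h} = \int_0^1 \sum_{i=1}^4 \partial_i L(y+sh\Delta,t)\cdot\Delta_i \, ds,
\end{equation*}
with $y=(u,K[u],\dot u,K[\dot u])$ and $\Delta=(v,K[v],\dot v,K[\dot v])$. By Proposition~\ref{prop1}, each $\|\partial_i L\|$ is pointwise dominated by some $P_i \in \PP_{M_i}$ with $M_1=1$, $M_2=M_4=q'$, $M_3=p'$. Applying Lemma~\ref{lemdomP} to each $P_i$ produces a fixed $g_i \in \L^{M_i}(a,b;\R^+)$ that dominates $P_i(u+shv,\dots,t)$ uniformly for $s \in [0,1]$ and $|h| \leq \varepsilon$. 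Hence, uniformly in $h \in [-\varepsilon,\varepsilon]$, the difference quotient is bounded in absolute value by
\begin{equation*}
g_1 \,\|v\| + g_2\,\|K[v]\| + g_3\,\|\dot v\| + g_4\,\|K[\dot v]\|.
\end{equation*}
Since $v,\dot v \in \L^\infty \cap \L^p$ (indeed $v \in \Cc$) and $K[v],K[\dot v] \in \L^q$ (by boundedness of $K$), Hölder's inequality applied term by term — pairing exponents $(1,\infty)$, $(q',q)$, $(p',p)$, $(q',q)$ — shows that this dominating function lies in $\L^1(a,b;\R^+)$.

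With a pointwise limit and an integrable dominant in hand, the dominated convergence theorem gives both the existence of $D\LL(u)(v)$ and the announced formula. The main technical obstacle is exactly the uniform-in-$h$ domination of the integrand: a naive bound on $\partial_i L$ at the perturbed argument is not obviously $\L^1$, and this is where the structural assumption $P_i \in \PP_{M_i}$ combined with Lemma~\ref{lemdomP} does essential work by converting the nonlinear polynomial bound on $\partial_i L$ into a fixed $\L^{M_i}$ envelope along the entire segment $\{u+hv : |h| \leq \varepsilon\}$.
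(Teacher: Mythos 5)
Your proof is correct and follows essentially the same route as the paper: the paper phrases the argument as differentiation under the integral sign with the bound $g_1\Vert v\Vert + g_2\Vert K[v]\Vert + g_3\Vert \dot v\Vert + g_4\Vert K[\dot v]\Vert$ obtained from Lemma~\ref{lemdomP} and H\"older, while you merely unpack that classical theorem by applying the mean value theorem in integral form to the difference quotient before invoking dominated convergence. The key step --- the uniform-in-$h$ $\L^{M_i}$ envelope along the segment $\{u+hv\}$ supplied by Lemma~\ref{lemdomP} --- is used identically in both arguments.
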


\begin{proof}
Let $u \in \E$ and $v \in \Cc$. Let us define:
\begin{equation}
\psi_{u,v} (t,h) : = L (u(t)+hv(t),K[u](t)+hK[v](t),\dot{u}(t)
+h\dot{v}(t),K[\dot{u}](t)+hK[\dot{v}](t),t ),
\end{equation}
for any $\vert h \vert \leq \varepsilon$ and for almost every $t \in (a,b)$.
Then, let us define the following map:
\begin{equation}
\fonction{\phi_{u,v}}{[-\varepsilon,\varepsilon]}{\R}{h}{\LL (u+hv)
= \di \int_a^b \psi_{u,v} (t,h) \; dt.}
\end{equation}
Our aim is to prove that the following term:
\begin{equation}
D\LL (u)(v) = \lim\limits_{h \to 0} \dfrac{\LL (u+hv) - \LL (u)}{h}
= \lim\limits_{h \to 0} \dfrac{\phi_{u,v} (h) - \phi_{u,v}(0)}{h} = \phi_{u,v}'(0)
\end{equation}
exists in $\R$. In order to differentiate $\phi_{u,v}$, we use the theorem of differentiation
under the integral sign. Indeed, we have for almost all $t \in (a,b)$ that $\psi_{u,v}(t,\cdot)$
is differentiable on $[-\varepsilon,\varepsilon]$ with
\begin{multline}
\dfrac{\partial \psi_{u,v}}{\partial h} (t,h) = \partial_1 L(\star_h) \cdot v(t)
+ \partial_2 L(\star_h)\cdot K[v](t) + \partial_3 L(\star_h) \cdot \dot{v}(t)
+ \partial_4 L(\star_h) \cdot K[\dot{v}](t),
\end{multline}
where $\star_h = (u(t)+hv(t),K[u](t)+hK[v](t),\dot{u}(t)+h\dot{v}(t),K[\dot{u}](t)+hK[\dot{v}](t),t )$.
Then, since $L$ satisfies the assumptions of Proposition~\ref{prop1} and from Lemma~\ref{lemdomP},
there exist $g_1 \in \L^1 (a,b;\R^+)$, $g_2 \in \L^{q'} (a,b;\R^+)$, $g_3 \in \L^{p'} (a,b;\R^+)$
and $g_4 \in \L^{q'} (a,b;\R^+)$ such that for any $h \in [-\varepsilon,\varepsilon]$
and for almost all $t \in (a,b)$:
\begin{equation}
\label{eqproof2}
\left\vert \dfrac{\partial \psi_{u,v}}{\partial h} (t,h) \right\vert \leq g_1 (t) \Vert v(t) \Vert
+ g_2 (t) \Vert K[v](t) \Vert + g_3 (t) \Vert \dot{v}(t) \Vert + g_4 (t) \Vert K[\dot{v}](t) \Vert.
\end{equation}
Since $v \in \L^\infty$, $K[v] \in \L^q$, $\dot{v} \in \L^p$ and $K[\dot{v}] \in \L^q$,
we can conclude that the right-hand side of inequality \eqref{eqproof2} is in $\L^1 (a,b;\R^+)$
and is independent of $h$. Consequently, we can use the theorem of differentiation under the integral
sign and we obtain that $\phi_{u,v}$ is differentiable with:
\begin{eqnarray}
\forall h \in [-\varepsilon,\varepsilon], \; \phi_{u,v}'(h)
= \di \int_a^b \dfrac{\partial \psi_{u,v}}{\partial h} (t,h) \; dt .
\end{eqnarray}
The proof is completed by taking $h=0$ in the previous equality.
\end{proof}


\subsection{Generalized Euler--Lagrange equation}

Let us give a characterization of the critical points of $\LL$. In this way,
let us introduce $\fonctionsansdef{K^*}{\L^{q'}}{\L^{p'}}$ the adjoint operator of $K$ satisfying:
\begin{equation}
\forall u_1 \in \L^{q'}, \; \forall u_2 \in \L^{p}, \; \di \int_a^b u_1 \cdot K[u_2] \; dt
= \di \int_a^b K^*[u_1] \cdot u_2 \; dt.
\end{equation}
Let us remind that the existence and the uniqueness of $K^*$ is provided by the classical Riesz theorem.
Using this adjoint operator, we can prove the following result:

\begin{theorem}
\label{thmgel}
Let us assume that $L$ satisfies the assumptions of Proposition~\ref{prop1} and let $u \in \E$.
Then, $u$ is a critical point of $\LL$ if and only if $u$ is a weak solution of the following
\emph{generalized Euler--Lagrange equation}:
\begin{equation}
\label{gel}\tag{GEL}
\dfrac{d}{dt} \big( \partial_3 L + K^* [\partial_4 L] \big) = \partial_1 L+K^* [\partial_2 L],
\end{equation}
where $\partial_i L$ are taken in $(u,K[u],\dot{u},K[\dot{u}],t)$ for any $i=1,2,3,4$.
\end{theorem}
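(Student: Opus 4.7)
The plan is to start from the explicit formula for $D\LL(u)(v)$ given by Proposition~\ref{prop2} and to convert it, by means of the adjoint operator $K^*$ and a Du~Bois--Reymond type argument, into the pointwise equation \eqref{gel}. First I would observe that the regularity of $L$ (Proposition~\ref{prop1}) gives $\partial_2 L(\cdots), \partial_4 L(\cdots) \in \L^{q'}$ and $\partial_1 L(\cdots)\in \L^{1}$, $\partial_3 L(\cdots)\in \L^{p'}$; and since $K^*\colon \L^{q'}\to \L^{p'}$ is well-defined, $K^*[\partial_2 L]$ and $K^*[\partial_4 L]$ lie in $\L^{p'}$. In particular every term in \eqref{gel} is locally integrable, so the almost-everywhere statement makes sense.

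Next I would rewrite $D\LL(u)(v)$ using $K^*$. For any $v\in\Cc$, both $v$ and $\dot v$ lie in $\L^p$, so the defining property of the adjoint yields
\begin{equation*}
\int_a^b \partial_2 L \cdot K[v]\,dt = \int_a^b K^*[\partial_2 L]\cdot v\,dt,
\qquad
\int_a^b \partial_4 L \cdot K[\dot v]\,dt = \int_a^b K^*[\partial_4 L]\cdot \dot v\,dt.
\end{equation*}
Plugging this into Proposition~\ref{prop2} gives
\begin{equation*}
D\LL(u)(v) = \int_a^b \bigl(\partial_1 L + K^*[\partial_2 L]\bigr)\cdot v
+ \bigl(\partial_3 L + K^*[\partial_4 L]\bigr)\cdot \dot v\,dt.
\end{equation*}

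Now I would introduce the antiderivative $\Psi(t) := \int_a^t \bigl(\partial_1 L + K^*[\partial_2 L]\bigr)(s)\,ds$, which is absolutely continuous since the integrand is in $\L^1$. Since $v\in \Cc$ vanishes at $a$ and $b$, an integration by parts turns the first half of the integral into $-\int_a^b \Psi\cdot\dot v\,dt$, so that
\begin{equation*}
D\LL(u)(v) = \int_a^b \bigl(\partial_3 L + K^*[\partial_4 L] - \Psi\bigr)\cdot \dot v\,dt,
\qquad \forall v\in\Cc.
\end{equation*}
The reverse implication in the theorem is then immediate by reading this identity backwards: if $u$ weakly satisfies \eqref{gel}, then $\partial_3 L + K^*[\partial_4 L] - \Psi$ is constant a.e., so the right-hand side vanishes for every $v\in\Cc$.

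For the direct implication, $u$ being a critical point means the above integral vanishes for every $v\in\Cc$. Here I would invoke the classical Du~Bois--Reymond lemma (fundamental lemma of calculus of variations), whose hypotheses are met because $\partial_3 L + K^*[\partial_4 L] - \Psi \in \L^{1}$, to conclude that there exists a constant $c\in \R^d$ with $\partial_3 L + K^*[\partial_4 L] = \Psi + c$ almost everywhere on $(a,b)$. Differentiating this identity (which is legal since the right-hand side is absolutely continuous, hence so is the left-hand side) yields exactly \eqref{gel} almost everywhere. The main subtlety I expect is the bookkeeping of integrability classes needed to justify the use of the adjoint identity and the Du~Bois--Reymond lemma; once those are pinned down the algebra is routine.
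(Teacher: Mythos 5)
Your proposal is correct and follows essentially the same route as the paper: rewrite $D\LL(u)(v)$ via the adjoint $K^*$, introduce an absolutely continuous antiderivative of $\partial_1 L + K^*[\partial_2 L]$, integrate by parts, and apply the Du~Bois--Reymond lemma before differentiating. The only difference is cosmetic (you make the integrability bookkeeping and the name of the lemma explicit).
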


\begin{proof}
Let $u \in \E$. Then, from Proposition~\ref{prop2}, we have for any $v \in \Cc$:
\begin{eqnarray}
D\LL (u)(v) & = & \di \int_a^b \partial_1 L \cdot v + \partial_2 L \cdot K[v]
+ \partial_3 L \cdot \dot{v} + \partial_4 L \cdot K[\dot{v}] \; dt \\
& = & \di \int_a^b \big( \partial_1 L + K^* [ \partial_2 L ] \big) \cdot v
+ \big( \partial_3 L + K^* [\partial_4 L] \big) \cdot \dot{v} \; dt.
\end{eqnarray}
Then, taking an absolutely continuous anti-derivative $w_u$ of $\partial_1 L
+ K^* [ \partial_2 L ] \in \L^1$, we obtain by integration by parts that:
\begin{equation}
\label{eqproof3}
D\LL (u)(v) = \di \int_a^b \big( \partial_3 L
+ K^* [\partial_4 L] - w_u \big) \cdot \dot{v} \; dt.
\end{equation}
From definition, $u$ is a critical point of $\LL$ if and only if $D\LL (u)(v) = 0$
for any $v \in \Cc$. Consequently, from equality \eqref{eqproof3}, $u$ is a critical
point of $\LL$ if and only if there exists a constant $C \in \R^d$ such that
for almost all $t \in (a,b)$, we have:
\begin{equation}
\label{eqproof333}
\partial_3 L + K^* [\partial_4 L] = C + w_u.
\end{equation}
Since the right-hand side of \eqref{eqproof333} is absolutely continuous, we can differentiate
it almost everywhere on $(a,b)$. Finally, we obtain that $u$ is a critical point of $\LL$
if and only if the following equation holds almost everywhere on $(a,b)$:
\begin{equation}
\dfrac{d}{dt} \big( \partial_3 L + K^* [\partial_4 L] \big) = \partial_1 L+K^* [\partial_2 L].
\end{equation}
The proof is complete.
\end{proof}

Finally, combining Theorems~\ref{thmtonelli} and \ref{thmgel}, we prove the following corollary
stating a necessary optimality condition for a minimizer of $\LL$:

\begin{corollary}
Let us assume that $L$ satisfies the assumptions of Proposition~\ref{prop1}, $\LL$ is coercive
on $E$ and $L(\cdot,t)$ is convex on $(\R^d)^4$ for any $t \in [a,b]$. Then, the minimizer
$\bar{u}$ of $\LL$ (given by Theorem~\ref{thmtonelli}) is a weak solution
of the generalized Euler--Lagrange equation \eqref{gel}.
\end{corollary}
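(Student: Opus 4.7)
The plan is to concatenate the two preceding structural results: Theorem~\ref{thmtonelli} provides existence and Theorem~\ref{thmgel} provides the characterization, so the only real content is the standard fact that a minimizer on $\E$ is a critical point of $\LL$ in the $\Cc$-direction.

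First, I would invoke Theorem~\ref{thmtonelli}: the three hypotheses of the corollary (regularity of $L$ via Proposition~\ref{prop1}, coercivity of $\LL$ on $\E$, and convexity of $L(\cdot,t)$) are exactly those of the Tonelli-type theorem, so there exists a minimizer $\bar{u} \in \E$ of $\LL$.

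Next, I would show that $\bar{u}$ is a critical point of $\LL$ in the $\Cc$-direction. Fix $v \in \Cc$. The standing assumption of Section~\ref{section2} on $\E$ provides $\varepsilon \in (0,1]$ such that $\bar{u}+hv \in \E$ for all $\vert h \vert \leq \varepsilon$. Since $\bar{u}$ minimizes $\LL$ over $\E$, the real-valued function $h \mapsto \LL(\bar{u}+hv)$ defined on $[-\varepsilon,\varepsilon]$ attains its minimum at the interior point $h=0$. By Proposition~\ref{prop2}, this function is differentiable at $0$ with derivative $D\LL(\bar{u})(v)$; hence this derivative vanishes. As $v \in \Cc$ was arbitrary, $D\LL(\bar{u})=0$ on $\Cc$, i.e. $\bar{u}$ is a critical point of $\LL$.

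Finally, I would apply Theorem~\ref{thmgel}, whose hypotheses (that $L$ satisfies the assumptions of Proposition~\ref{prop1}) are already in force, to conclude that $\bar{u}$, being a critical point, is a weak solution of the generalized Euler--Lagrange equation~\eqref{gel}. There is no genuine obstacle in this argument; the proof is essentially a bookkeeping step. The only point that deserves a moment's care is checking that the corollary is read inside Section~\ref{section2}, so that the standing assumption $\E+\Cc \subset \E$ (needed both to apply Proposition~\ref{prop2} and to make the interior-minimum argument valid) is indeed available.
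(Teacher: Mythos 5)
Your proposal is correct and follows exactly the same route as the paper: apply Theorem~\ref{thmtonelli} for existence, note that the minimizer is a critical point, and conclude with Theorem~\ref{thmgel}. The only difference is that you spell out the ``minimizer implies critical point'' step (using the standing assumption on $\E$ and Proposition~\ref{prop2} to justify the vanishing of $D\LL(\bar{u})(v)$ at the interior minimum $h=0$), which the paper states without elaboration.
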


\begin{proof}
Indeed, since $L$ satisfies the assumptions of Proposition~\ref{prop1}, $L$ is regular.
Consequently, from Theorem~\ref{thmtonelli}, we know that $\LL$ admits a minimizer $\bar{u} \in \E$.
In particular, $\bar{u}$ is a critical point of $\LL$. Finally, from Theorem~\ref{thmgel},
$\bar{u}$ is a weak solution of \eqref{gel}.
\end{proof}


\section{Application to kernel operators $K$}
\label{section3}

In Sections~\ref{section1} and \ref{section2}, the general assumption made on the operator $K$
is totally independent of the considered set $\E$ and considered Lagrangian $L$. Then, we can
give general examples independent of these two elements. \\

Precisely, this paper is devoted to general kernel operators used in \cite{kiry,odzi,odzi2},
see Section~\ref{section31}. Let us note that fractional integrals of Riemann--Liouville
and Hadamard are particular examples of kernel operators,
see Sections~\ref{section32},~\ref{section32b} and \ref{section33}.


\subsection{General kernel operators}
\label{section31}

Let us define the following triangle:
\begin{equation}
\Delta := \{ (t,x) \in \R^2, \; a \leq x < t \leq b \}
\end{equation}
and let us consider $k$ a function defined almost everywhere on $\Delta$ with values
in $\R$. For any function $f$ defined almost everywhere on $(a,b)$
with values in $\R^d$, let us define for almost all $t \in (a,b)$:
\begin{equation}
K[f](t) = \lambda_1 \di \int_a^t k(t,y) f(y) \; dy
+ \lambda_2 \int_t^b k(y,t) f(y) \; dy,
\end{equation}
with $\lambda_1$, $\lambda_2 \in \R$.
Operator $K$ is said to be a kernel operator. \\

Assuming regularity of the kernel $k$, we can prove the following result:

\begin{proposition}
\label{prop3}
Let us assume that $q \geq p'$ and $k \in \L^q (\Delta;\R)$.
Then, $K$ is a linear bounded operator from $\L^p$ to $\L^q$.
\end{proposition}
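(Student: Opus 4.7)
The plan is to show linearity (which is immediate from the linearity of the integral) and then to bound $\|K[f]\|_{\L^q}$ in terms of $\|f\|_{\L^p}$ by treating the two integrals separately. Write $K_1[f](t) := \int_a^t k(t,y) f(y)\, dy$ and $K_2[f](t) := \int_t^b k(y,t) f(y)\, dy$, so that $K = \lambda_1 K_1 + \lambda_2 K_2$; by the triangle inequality it suffices to bound $\|K_1[f]\|_{\L^q}$ and $\|K_2[f]\|_{\L^q}$ separately by a constant times $\|f\|_{\L^p}$.

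For $K_1$, the first step is a pointwise Hölder in the $y$-variable with conjugate exponents $p'$ and $p$:
\begin{equation}
|K_1[f](t)| \;\leq\; \left(\int_a^t |k(t,y)|^{p'}\, dy\right)^{1/p'} \|f\|_{\L^p}.
\end{equation}
The key move is then to convert the inner $\L^{p'}$-norm of $k(t,\cdot)$ into an $\L^q$-norm. This is exactly where the hypothesis $q \geq p'$ enters: on the bounded interval $(a,t)$ one has the continuous inclusion $\L^q \hookrightarrow \L^{p'}$, yielding
\begin{equation}
\left(\int_a^t |k(t,y)|^{p'}\, dy\right)^{1/p'} \;\leq\; (b-a)^{1/p' - 1/q}\left(\int_a^t |k(t,y)|^q\, dy\right)^{1/q}.
\end{equation}
Raising to the power $q$, integrating in $t$ over $(a,b)$, and recognizing the resulting double integral as $\|k\|_{\L^q(\Delta)}^q$ gives the desired estimate $\|K_1[f]\|_{\L^q} \leq (b-a)^{1/p' - 1/q} \|k\|_{\L^q(\Delta)} \|f\|_{\L^p}$.

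The bound on $K_2$ is obtained by exactly the same procedure, except that after the pointwise Hölder estimate one arrives at $\int_a^b \int_t^b |k(y,t)|^q\, dy\, dt$, which by Fubini (swapping the order of integration over the triangle $\{a \leq t < y \leq b\}$) is also equal to $\|k\|_{\L^q(\Delta)}^q$. Summing the two estimates with weights $|\lambda_1|$ and $|\lambda_2|$ concludes the proof.

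The main obstacle is bookkeeping the exponents so that the hypothesis $q \geq p'$ is used in the only place it can be used, namely in the embedding $\L^q(a,t) \hookrightarrow \L^{p'}(a,t)$; absent this assumption, the pointwise Hölder bound on $|K_i[f](t)|$ produces an $\L^{p'}$-norm of the kernel slice which cannot be directly extracted from the global $\L^q(\Delta)$-norm. Everything else is standard Hölder and Fubini manipulation on the triangle $\Delta$.
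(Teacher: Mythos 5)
Your proposal is correct and follows essentially the same route as the paper's proof: a pointwise H\"older estimate in $y$, followed by the embedding $\L^q(a,t) \hookrightarrow \L^{p'}(a,t)$ (a second H\"older, using $q \geq p'$) to produce the constant $(b-a)^{1/p'-1/q}$, then integration in $t$ and Fubini on the triangle to recover $\Vert k \Vert_{\L^q(\Delta;\R)}$. The only cosmetic difference is that you make the splitting $K = \lambda_1 K_1 + \lambda_2 K_2$ and the Fubini swap for the second term fully explicit, which the paper leaves as "the same strategy."
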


\begin{proof}
The linearity is obvious. Then, let us prove that $K$ is bounded from $\L^p$ to $\L^q$.
Considering only the first term, let us prove that the following inequality holds for any $ f \in \L^p$:
\begin{equation}
\label{eq01}
\left( \di \int_a^b \left\Vert \di \int_a^t k(t,y) f(y) \; dy \right\Vert^q \; dt \right)^{1/q}
\leq (b-a)^{(1/p')-(1/q)} \Vert k \Vert_{\L^q (\Delta,\R)} \Vert f \Vert_{\L^p}.
\end{equation}
Since $q \geq p'$ and using Fubini's theorem, we have $k(t,\cdot) \in \L^q (a,t;\R)
\subset \L^{p'}(a,t;\R)$ for almost all $t \in (a,b)$. Then, using two times H\"older's
inequality, we have for almost all $t \in (a,b)$:
\begin{equation}\label{eq02}
\left\Vert \di \int_a^t k(t,y) f(y) \; dy \right\Vert^q
\leq \left( \di \int_a^t \vert k(t,y) \vert^{p'} \; dy
\right)^{q/p'} \Vert f \Vert_{\L^p}^q \leq (b-a)^{(q/p')-1}
\di \int_a^t \vert k(t,y) \vert^q \; dy \; \Vert f \Vert_{\L^p}^q.
\end{equation}
Hence, integrating equation \eqref{eq02} on the interval $(a,b)$,
we obtain inequality \eqref{eq01}. The proof is completed using
the same strategy on the second term in the definition of $K$.
\end{proof}

In the special case $q=p'$, let us explicit the value of $K^*$:

\begin{proposition}
\label{prop4}
Let us assume that $q=p'$ and $k \in \L^q (\Delta;\R)$. Then,
the operator $K^*$ defined for any $f \in \L^{q'}$ and almost all $t \in (a,b)$ by:
\begin{equation}
\label{eq03}
K^* [f](t) = \lambda_2 \di \int_a^t k(t,y) f(y) \; dy + \lambda_1 \int_t^b k(y,t) f(y) \; dy
\end{equation}
is a linear bounded operator from $\L^{q'}$ to $\L^{p'}$. Moreover, $K^*$ is the adjoint operator of $K$.
\end{proposition}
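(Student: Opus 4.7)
The plan is to exploit the fact that, once $q=p'$, the formula defining $K^*$ has exactly the same analytical structure as the formula defining $K$, and then to establish the adjoint identity by a direct Fubini computation.

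First, I would check that $K^*$ is linear and bounded from $\L^{q'}$ to $\L^{p'}$. Linearity is immediate from \eqref{eq03}. For boundedness, the key observation is that when $q=p'$ one has $(q')'=q=p'$ and $p'=q$, so applying Proposition~\ref{prop3} with the pair $(p,q)$ replaced by $(q',p')$ is legitimate: the hypothesis $p'\geq (q')'$ reduces to the equality $q=p'$, and $k\in\L^{p'}(\Delta;\R)$ is precisely $k\in\L^q(\Delta;\R)$. Since \eqref{eq03} only interchanges the roles of $\lambda_1$ and $\lambda_2$ compared with the definition of $K$, Proposition~\ref{prop3} directly produces the desired boundedness $K^*\colon\L^{q'}\to\L^{p'}$.

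Next, I would verify the adjoint identity. For $u_1\in\L^{q'}$ and $u_2\in\L^p$, I would expand
\begin{equation*}
\int_a^b u_1\cdot K[u_2]\;dt
=\lambda_1\int_a^b\!\!\int_a^t k(t,y)\,u_1(t)\cdot u_2(y)\,dy\,dt
+\lambda_2\int_a^b\!\!\int_t^b k(y,t)\,u_1(t)\cdot u_2(y)\,dy\,dt,
\end{equation*}
and apply Fubini's theorem to swap the orders of integration in each double integral. On the triangle $\{a\leq y\leq t\leq b\}$, the first term becomes $\lambda_1\int_a^b u_2(y)\cdot\int_y^b k(t,y)u_1(t)\,dt\,dy$, and on $\{a\leq t\leq y\leq b\}$ the second becomes $\lambda_2\int_a^b u_2(y)\cdot\int_a^y k(y,t)u_1(t)\,dt\,dy$. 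Summing and comparing with \eqref{eq03} yields exactly $\int_a^b K^*[u_1]\cdot u_2\,dy$. Since Riesz guarantees uniqueness of the adjoint, this identifies the operator defined by \eqref{eq03} as $K^*$.

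The only non-routine point is justifying Fubini, which requires $\int_\Delta |k(t,y)|\,\Vert u_1(t)\Vert\,\Vert u_2(y)\Vert\,dt\,dy<\infty$ and its mirror analogue on $\{a\leq t\leq y\leq b\}$. I would obtain this by repeating the Hölder chain used in the proof of Proposition~\ref{prop3}: the equality $q=p'$ ensures $k(t,\cdot)\in\L^{p'}(a,t;\R)$ for almost every $t$; Hölder in $y$ with exponents $(p',p)$ produces a factor $\Vert u_2\Vert_{\L^p}$ and the slice quantity $\bigl(\int_a^t|k(t,y)|^{p'}\,dy\bigr)^{1/p'}$; a second Hölder in $t$ with exponents $(q,q')$, combined with $\Vert k\Vert_{\L^q(\Delta;\R)}^q=\int_a^b\!\int_a^t|k(t,y)|^q\,dy\,dt$ and the identity $q=p'$, closes the estimate by a finite constant times $\Vert k\Vert_{\L^q(\Delta;\R)}\Vert u_1\Vert_{\L^{q'}}\Vert u_2\Vert_{\L^p}$. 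The mirror triangle is handled identically, which completes the justification.
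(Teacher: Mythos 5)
Your proof is correct and follows essentially the same route as the paper: boundedness of $K^*$ by applying Proposition~\ref{prop3} with the exponent pairs $(p,q)$ and $(q',p')$ exchanged (the hypothesis collapsing to the equality $q=p'$), and the adjoint identity by swapping the order of integration on each of the two triangles via Fubini. Your extra paragraph justifying the applicability of Fubini through the Hölder chain is a detail the paper leaves implicit, but it changes nothing in the structure of the argument.
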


\begin{proof}
Since $q = p'$ and using Proposition~\ref{prop3}, $K$ is a linear bounded operator from $\L^p$ to $\L^q$.
Exchanging the roles of $p$ and $q'$ and exchanging the roles of $q$ and $p'$ in Proposition~\ref{prop3},
we obtain that $K^*$ is a linear bounded operator from $\L^{q'}$ to $\L^{p'}$. The second part is easily
proved using Fubini's theorem. Indeed, considering only the first term of the definition of $K$,
the following inequality holds for any $u_1 \in \L^{q'}$ and any $u_2 \in \L^p$:
\begin{equation}
\di \int_a^b u_1 (t) \cdot \int_a^t k(t,y) u_2(y) \; dy \; dt
= \di \int_a^b u_2 (y) \cdot \int_y^b k(t,y) u_1(t) \; dt \; dy.
\end{equation}
The proof is completed by using the same strategy on the second term of the definition of $K$.
\end{proof}

In the case of a general kernel operator $K$ associated to a kernel
$k \in \L^q(\Delta;\R) $ with $q=p'$, let us define the following operators:
\begin{equation}
A := \dfrac{d}{dt} \circ K, \quad B:= K \circ \dfrac{d}{dt}, \quad A^*
:= \dfrac{d}{dt} \circ K^* \quad \text{and} \quad B^*:= K^* \circ \dfrac{d}{dt}.
\end{equation}
Then, the generalized Lagrangian functional $\LL$ can be written as:
\begin{equation}
\fonction{\LL}{\E}{\R}{u}{\di \int_a^b L(u,K[u],\dot{u},B[u],t) \; dt.}
\end{equation}
We then recover the generalized Lagrangian functional $\LL$ studied in \cite{odzi}
where the existence of a minimizer is posed as an open question. Let us assume additionally
that $\E$ and $L$ satisfy the assumptions of Section~\ref{section2}. If a solution $u \in \E$
of the generalized Euler--Lagrange equation \eqref{gel} is sufficiently regular
(in order to make $\partial_3 L$ and $K^* [\partial_4 L]$ absolutely continuous),
then \eqref{gel} in $u$ can be written as:
\begin{equation}
\dfrac{d}{dt} \big( \partial_3 L \big) + A^* [\partial_4 L]
= \partial_1 L+K^* [\partial_2 L].
\end{equation}
Hence, we recover the generalized Euler--Lagrange equation proved in \cite{odzi}.


\subsection{The fractional integrals of Riemann--Liouville}
\label{section32}

In this section, we assume that $q=p$. For any $0 < \alpha < 1$, we denote by $K^\alpha$
the kernel operator associated to $k^\alpha (t,x) = (t-x)^{\alpha -1} / \Gamma (\alpha)$.
In this case, $K^\alpha$ corresponds to the operator $\lambda_1 I^\alpha_{a+}
+ \lambda_2 I^\alpha_{b-}$ where $I^\alpha_{a+}$ (resp. $I^\alpha_{b-}$) denotes
the left (resp. right) fractional integral of Riemann--Liouville of order $\alpha$.
We refer to \cite{kilb,samk} for details proving that:
\begin{itemize}
\item $I^\alpha_{a+}$ and $I^\alpha_{b-}$ are linear bounded operators from $\L^p$ to $\L^p$;
\item $I^\alpha_{a+}$ is the adjoint operator of $I^\alpha_{b-}$ (and conversely).
\end{itemize}
Consequently, $K^\alpha$ is a linear bounded operator from $\L^p$ to $\L^p$ and $K^*$
is given by $\lambda_2 I^\alpha_{a+} + \lambda_1 I^\alpha_{b-}$. \\

Let us remind that the common left and right fractional derivatives
of Riemann--Liouville (resp. of Caputo) of order $\alpha$ are respectively given by:
\begin{multline}
D^\alpha_{a+} = \dfrac{d}{dt} \circ I^{1-\alpha}_{a+} \; \text{and} \; D^\alpha_{b-}
= - \dfrac{d}{dt} \circ I^{1-\alpha}_{b-} \\ \left( \text{resp.} \; {}_{\mathrm{c}} D^\alpha_{a+}
=  I^{1-\alpha}_{a+} \circ \dfrac{d}{dt} \; \text{and} \; {}_{\mathrm{c}} D^\alpha_{b-}
=  - I^{1-\alpha}_{b-} \circ \dfrac{d}{dt} \right).
\end{multline}
Finally, in the particular case $K=K^{1-\alpha}$ and $(\lambda_1,\lambda_2) = (1,0)$,
Section~\ref{section1} recovers the case of the following fractional Lagrangian functional:
\begin{equation}
\fonction{\LL}{\E}{\R}{u}{\di \int_a^b L(u,I^{1-\alpha}_{a+}[u],
\dot{u},{}_{\mathrm{c}} D^\alpha_{a+} u,t) \; dt,}
\end{equation}
studied in \cite{bour9}. Let us assume additionally that $\E$ and $L$ satisfy
the assumptions of Section~\ref{section2}. If a solution $u \in \E$
of the generalized Euler--Lagrange equation \eqref{gel} is sufficiently regular
(in order to make $\partial_3 L$ and $I^{1-\alpha}_{b-} [\partial_4 L]$ absolutely continuous),
then \eqref{gel} along $u$ can be written as the following fractional Euler--Lagrange equation:
\begin{equation}
\dfrac{d}{dt} \big( \partial_3 L \big) - D^{\alpha}_{b-} [\partial_4 L]
= \partial_1 L+I^{1-\alpha}_{b-} [\partial_2 L].
\end{equation}


\subsection{The fractional integrals of Riemann--Liouville with variable order}
\label{section32b}

In this section, we assume that $q=p'$. For any map $\fonctionsansdef{\alpha}{\Delta}{[\delta,1]}$
with $\delta > (1/p)$, we denote by $K^\alpha$ the kernel operator associated
to $k^\alpha (t,x) = (t-x)^{\alpha (t,x) -1} / \Gamma (\alpha (t,x))$. In this case,
$K^\alpha$ corresponds to the operator $\lambda_1 I^\alpha_{a+} + \lambda_2 I^\alpha_{b-}$
where $I^\alpha_{a+}$ (resp. $I^\alpha_{b-}$) denotes the left (resp. right) fractional integral
of Riemann--Liouville with variable order $\alpha$, see \cite{lore,odzi3,samk2}. In this section,
we have just to prove that $k^\alpha \in \L^q (\Delta,\R)$ in order to use the results
of Section~\ref{section31}. Let us note that since $\alpha$ is with values in $[\delta,1]$
with $\delta > 0$, then $1 / (\Gamma \circ \alpha)$ is bounded. Hence, we have just to prove that
$(\Gamma \circ \alpha ) k^\alpha \in \L^q (\Delta,\R)$. We have two different cases: $b-a \leq 1$ and $b-a > 1$. \\

In the first case, for any $(t,x) \in \Delta$, we have $0 < t-x \leq 1$ and $q(\delta -1) > -1$. Then:
\begin{equation}
\di \int_a^t (t-x)^{q(\alpha (t,x) -1)} \; dx \leq \di \int_a^t (t-x)^{q(\delta -1)} \; dx
= \dfrac{(t-a)^{q(\delta -1)+1}}{q(\delta -1)+1} \leq \dfrac{1}{q(\delta -1)+1}.
\end{equation}
In the second case, for almost all $(t,x) \in \Delta \cap (a,a+1) \times (a,b)$,
we have $0 < t-x \leq 1$. Consequently, we conclude in the same way that:
\begin{equation}
\di \int_a^t (t-x)^{q(\alpha (t,x) -1)} \; dx \leq \dfrac{1}{q(\delta -1)+1}.
\end{equation}
Still in the second case, for almost all $(t,x) \in \Delta \cap (a+1,b) \times (a,b)$,
we have $x < t-1$ or $t-1 \leq x \leq t$. Then:
\begin{eqnarray}
\di \int_a^t (t-x)^{q(\alpha (t,x) -1)} \; dx & = & \di \int_a^{t-1} (t-x)^{q(\alpha (t,x) -1)} \; dx
+ \di \int_{t-1}^t (t-x)^{q(\alpha (t,x) -1)} \; dx \\
& \leq & b-a-1 + \dfrac{1}{q(\delta -1)+1}.
\end{eqnarray}
Consequently, in any case, there exists a constant $C \in \R$ such that for almost all $t \in (a,b)$:
\begin{equation}
\di \int_a^t \vert k^\alpha (t,x) \vert^q \; dx \leq C \in \L^1 (a,b;\R).
\end{equation}
Finally, $k^\alpha \in \L^q (\Delta,\R)$. From Section~\ref{section31}, $K$ is then a linear bounded
operator from $\L^p$ to $ \L^q$ and its adjoint operator is given by:
\begin{equation}
K^* = \lambda_2 I^\alpha_{a+} + \lambda_1 I^\alpha_{b-}.
\end{equation}
Then, we can apply the same strategy as in Section~\ref{section32} in order to recover the case
of a fractional Lagrangian functional involving fractional derivatives of Caputo with variable order
and to retrieve the associated fractional Euler--Lagrange equation.


\subsection{The fractional integrals of Hadamard}
\label{section33}

In this section, we assume that $a >0$ and $q=p$. For any $0 < \alpha < 1$,
we denote by $K^\alpha$ the kernel operator associated to $k^\alpha(t,x) = \log^{\alpha -1} (t/x) / x$.
In this case, $K^\alpha$ corresponds to the operator $\lambda_1 J^\alpha_{a+} + \lambda_2 J^\alpha_{b-}$
where $J^\alpha_{a+}$ (resp. $J^\alpha_{b-}$) denotes the left (resp. right) fractional integral of Hadamard
of order $\alpha$. We refer to \cite{kilb,samk} for details proving that:
\begin{itemize}
\item $J^\alpha_{a+}$ and $J^\alpha_{b-}$ are linear bounded operators from $\L^p$ to $\L^p$;
\item $J^\alpha_{a+}$ is the adjoint operator of $J^\alpha_{b-}$ (and conversely).
\end{itemize}
Consequently, $K^\alpha$ is a linear bounded operator from $\L^p$ to $\L^p$ and $K^*$
is given by $\lambda_2 J^\alpha_{a+} + \lambda_1 J^\alpha_{b-}$. \\

Let us remind that the common left and right fractional derivatives of Hadamard
(resp. of Caputo-Hadamard) of order $\alpha$ are respectively given by:
\begin{multline}
D^\alpha_{a+} = \dfrac{d}{dt} \circ J^{1-\alpha}_{a+} \; \text{and} \; D^\alpha_{b-}
= - \dfrac{d}{dt} \circ J^{1-\alpha}_{b-} \\ \left( \text{resp.} \; {}_{\mathrm{c}} D^\alpha_{a+}
= J^{1-\alpha}_{a+} \circ \dfrac{d}{dt} \; \text{and} \; {}_{\mathrm{c}} D^\alpha_{b-}
= - J^{1-\alpha}_{b-} \circ \dfrac{d}{dt} \right).
\end{multline}
In the particular case $K=K^{1-\alpha}$ and $(\lambda_1,\lambda_2) = (0,-1)$,
we get from Section~\ref{section1} the case of the following fractional Lagrangian functional:
\begin{equation}
\fonction{\LL}{\E}{\R}{u}{\di \int_a^b L(u,-J^{1-\alpha}_{b-}[u],
\dot{u},{}_{\mathrm{c}} D^\alpha_{b-} u,t) \; dt.}
\end{equation}
Let us assume additionally that $\E$ and $L$ satisfy the assumptions of Section~\ref{section2}.
If a solution $u \in \E$ of the generalized Euler--Lagrange equation \eqref{gel} is sufficiently
regular (in order to make $\partial_3 L$ and $J^{1-\alpha}_{a+} [\partial_4 L]$ absolutely continuous),
then \eqref{gel} taken in $u$ can be written as the following fractional Euler--Lagrange equation:
\begin{equation}
\dfrac{d}{dt} \big( \partial_3 L \big) - D^{\alpha}_{a+} [\partial_4 L]
= \partial_1 L-J^{1-\alpha}_{a+} [\partial_2 L].
\end{equation}


\section{Some improvements for Section~\ref{section1}}
\label{section4}

In this section, we assume more regularity of the Lagrangian $L$ and of the operator $K$.
It allows to weaken the convexity assumption in Theorem~\ref{thmtonelli}
and/or the assumptions of Propositions~\ref{prop1} and \ref{prop1b}.


\subsection{A first weaker convexity assumption}

Let us assume that $L$ satisfies the following condition:
\begin{equation}
\label{eqequicontcont1}
\Big( L(\cdot,x_2,x_3,x_4,t) \Big)_{(x_2,x_3,x_4,t) \in (\R^d)^{3} \times [a,b]}
\; \text{is uniformly equicontinuous on $\R^d$}.
\end{equation}
This condition has to be understood as:
\begin{multline}
\label{eqequicont1}
\forall \varepsilon > 0, \; \exists \delta > 0,
\; \forall (y,z) \in (\R^d)^2, \; \Vert y-z \Vert
\leq \delta \Longrightarrow \forall (x_2,x_3,x_4,t) \in (\R^d)^{3} \times [a,b], \\
\vert L(y,x_2,x_3,x_4,t) - L(z,x_2,x_3,x_4,t) \vert \leq \varepsilon.
\end{multline}
For example, this condition is satisfied for a Lagrangian $L$ with bounded $\partial_1 L$.
In this case, we can prove the following improved version of Theorem~\ref{thmtonelli}:

\begin{theorem}
\label{thmtonelliimproved1}
Let us assume that:
\begin{itemize}
\item $L$ satisfies the condition given in \eqref{eqequicontcont1};
\item $L$ is regular;
\item $\LL$ is coercive on $\E$;
\item $L(x_1,\cdot,t)$ is convex on $(\R^d)^3$ for any $x_1 \in \R^d$ and for any $t \in [a,b]$.
\end{itemize}
Then, there exists a minimizer for $\LL$.
\end{theorem}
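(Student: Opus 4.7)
The plan is to mirror the proof of Theorem~\ref{thmtonelli}, with one structural modification: since convexity is no longer available in the first variable, we handle the $x_1$-dependence separately by exploiting the uniform equicontinuity \eqref{eqequicontcont1} together with the strong convergence $u_n \to \bar u$ in $\L^\infty$ coming from the compact embedding $\W^{1,p} \hooktwoheadrightarrow \CC$. As before, coercivity yields a minimizing sequence $(u_n)_{n \in \N} \subset \E$ bounded in $\W^{1,p}$; reflexivity and the weak closedness of $\E$ then give, up to extraction, $u_n \rightharpoonup \bar u$ in $\W^{1,p}$ with $\bar u \in \E$, $u_n \to \bar u$ in $\L^\infty$, $\dot u_n \rightharpoonup \dot{\bar u}$ in $\L^p$, and, by boundedness of $K$, $K[u_n] \to K[\bar u]$ strongly in $\L^q$ and $K[\dot u_n] \rightharpoonup K[\dot{\bar u}]$ weakly in $\L^q$.

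The key step is the splitting $\LL(u_n) = A_n + B_n$ with
\[
A_n := \int_a^b \bigl[L(u_n,K[u_n],\dot u_n,K[\dot u_n],t) - L(\bar u,K[u_n],\dot u_n,K[\dot u_n],t)\bigr]\,dt, \qquad B_n := \int_a^b L(\bar u,K[u_n],\dot u_n,K[\dot u_n],t)\,dt.
\]
Given $\varepsilon>0$, condition~\eqref{eqequicontcont1} supplies $\delta > 0$ such that as soon as $\Vert u_n - \bar u \Vert_{\L^\infty} \leq \delta$ the integrand of $A_n$ is bounded by $\varepsilon$ uniformly in $t$; hence $|A_n| \leq \varepsilon(b-a)$ for $n$ large enough. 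For $B_n$, fix $t \in [a,b]$ and apply convexity of $L(\bar u(t),\cdot,t)$ on $(\R^d)^3$ at the point $(K[\bar u](t),\dot{\bar u}(t),K[\dot{\bar u}](t))$ to obtain the pointwise Taylor-type bound
\[
L(\bar u,K[u_n],\dot u_n,K[\dot u_n],t) \geq L(\bar u,K[\bar u],\dot{\bar u},K[\dot{\bar u}],t) + \partial_2 L \cdot (K[u_n]-K[\bar u]) + \partial_3 L \cdot (\dot u_n - \dot{\bar u}) + \partial_4 L \cdot (K[\dot u_n]-K[\dot{\bar u}]),
\]
where $\partial_i L$ are evaluated at $(\bar u,K[\bar u],\dot{\bar u},K[\dot{\bar u}],t)$ for $i=2,3,4$.

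Integrating this inequality, regularity of $L$ places $\partial_3 L \in \L^{p'}$ and $\partial_2 L,\partial_4 L \in \L^{q'}$, so each of the three linear terms is a duality pairing between a fixed element of the appropriate Lebesgue space and a sequence tending to zero (strongly for $K[u_n]-K[\bar u]$ in $\L^q$, weakly for $\dot u_n - \dot{\bar u}$ in $\L^p$ and for $K[\dot u_n]-K[\dot{\bar u}]$ in $\L^q$); all three integrals therefore vanish as $n \to \infty$. Combining with the estimate for $A_n$,
\[
\liminf_{n \to \infty} \LL(u_n) \geq \LL(\bar u) - \varepsilon(b-a),
\]
and letting $\varepsilon \to 0^+$ yields $\LL(\bar u) \leq \inf_{u \in \E} \LL(u)$, so $\bar u$ is the desired minimizer. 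The main obstacle is conceptual rather than computational: the convexity inequality only produces useful linear tails when all four slots are expanded around the weak limit, and the above splitting is precisely what lets us isolate the non-convex $x_1$-dependence (controlled by equicontinuity and strong $\L^\infty$ convergence) from the convex $(x_2,x_3,x_4)$-dependence (controlled exactly as in Theorem~\ref{thmtonelli}).
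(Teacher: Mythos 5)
Your proposal is correct and follows essentially the same route as the paper: the same minimizing-sequence/weak-compactness setup, the same splitting of $\LL(u_n)$ into the equicontinuity-controlled difference term (bounded by $\varepsilon(b-a)$ once $\Vert u_n-\bar u\Vert_\infty\leq\delta$) plus the term with $x_1$ frozen at $\bar u$, the same convexity (subgradient) inequality in the last three slots with the linear tails vanishing by the regularity of $L$ and the strong/weak convergences, and the same final passage $\varepsilon\to 0^+$. No gaps.
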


\begin{proof}
Indeed, with the same proof of Theorem~\ref{thmtonelli},
we can construct a weakly convergent sequence $(u_n)_{n \in \N} \subset \E$ satisfying:
\begin{equation}
u_n \xrightharpoonup[]{\W^{1,p}} \bar{u} \in \E \quad \text{and}
\quad \LL (u_n) \longrightarrow \inf\limits_{u \in \E} \LL (u) < +\infty.
\end{equation}
Since the compact embedding $\W^{1,p} \hooktwoheadrightarrow \CC$ holds,
we have $u_n \xrightarrow[]{\CC} \bar{u}$. Let $\varepsilon > 0$
and let us consider $\delta >0$ given in equation \eqref{eqequicont1}.
There exists $N \in \N$ such that for any $n \geq N$,
$\Vert u_n - \bar{u} \Vert_\infty \leq \delta$.
So, for any $n \geq N$ and for almost all $t \in (a,b)$:
\begin{equation}
\vert L(u_n,K[u_n],\dot{u}_n,K[\dot{u}_n],t)
- L(\bar{u},K[u_n],\dot{u}_n,K[\dot{u}_n],t)
\vert \leq \varepsilon.
\end{equation}
Consequently, for any $n \geq N$, we have:
\begin{equation}
\LL (u_n) \geq \di \int_a^b L(\bar{u},K[u_n],\dot{u}_n,K[\dot{u}_n],t) \; dt - (b-a) \varepsilon.
\end{equation}
From the convexity hypothesis and using the same strategy
as in the proof of Theorem~\ref{thmtonelli}, we have by passing to the limit on $n$:
\begin{equation}
\inf\limits_{u \in \E} \LL (u) \geq \LL (\bar{u}) - (b-a) \varepsilon.
\end{equation}
The proof is complete since the previous inequality is true for any $\varepsilon > 0$.
\end{proof}

Such an improvement allows to give examples of a Lagrangian $L$ without convexity
on its first variable. Taking inspiration from Example~\ref{ex1},
we can provide the following example:

\begin{example}
Let us consider $p=2$, $q \geq 2$ and $\E = \W^{1,2}_a$. Let us consider:
\begin{equation}
L(x_1,x_2,x_3,x_4,t) = f(x_1,t)+\dfrac{1}{2} \di \sum_{i=2}^4 \Vert x_i \Vert^2,
\end{equation}
for any function $\fonctionsansdef{f}{\R^d \times [a,b]}{\R}$ of class $\CC^1$
with $\partial_1 f$ bounded (like sine or cosine function). In this case,
$L$ satisfies the hypothesis of Theorem~\ref{thmtonelliimproved1} and we can conclude
with the existence of a minimizer of $\LL$ defined on $\E$.
\end{example}


\subsection{A second weaker convexity assumption}

In this section, we assume that $K$ is moreover a linear bounded operator from $\CC$ to $\CC$.
For example, this condition is satisfied by fractional integrals given
in Sections~\ref{section32} and \ref{section33} (see \cite{kilb,samk} for detailed proofs).
We also assume that $L$ satisfies the following condition:
\begin{equation}
\label{eqequicontcont2}
\Big( L(\cdot,\cdot,x_3,x_4,t) \Big)_{(x_3,x_4,t) \in (\R^d)^{2} \times [a,b]}
\; \text{is uniformly equicontinuous on $(\R^d)^2$}.
\end{equation}
This condition has to be understood as:
\begin{multline}
\label{eqequicont2}
\forall \varepsilon > 0, \; \exists \delta > 0, \; \forall (y,z) \in (\R^d)^2, \;
\forall (y_0,z_0) \in (\R^d)^2, \; \Vert y-z \Vert \leq \delta, \; \Vert y_0-z_0 \Vert
\leq \delta \\ \Longrightarrow \forall (x_3,x_4,t) \in (\R^d)^{2} \times [a,b],
\; \vert L(y,y_0,x_3,x_4,t) - L(z,z_0,x_3,x_4,t) \vert \leq \varepsilon.
\end{multline}
For example, this condition is satisfied for a Lagrangian $L$ with bounded $\partial_1 L$
and bounded $\partial_2 L$. In this case, we can prove
the following improved version of Theorem~\ref{thmtonelli}:

\begin{theorem}
\label{thmtonelliimproved2}
Let us assume that:
\begin{itemize}
\item $L$ satisfies the condition given in \eqref{eqequicontcont2};
\item $L$ is regular;
\item $\LL$ is coercive on $\E$;
\item $L(x_1,x_2,\cdot,t)$ is convex on $(\R^d)^2$
for any $(x_1,x_2) \in (\R^d)^2$ and for any $t \in [a,b]$.
\end{itemize}
Then, there exists a minimizer for $\LL$.
\end{theorem}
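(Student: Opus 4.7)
The plan is to follow the template of Theorem~\ref{thmtonelliimproved1}, but now exploiting the extra hypothesis that $K$ maps $\CC$ into $\CC$ continuously, so that we can simultaneously freeze both the first \emph{and} the second slot of $L$ at $\bar u$ and $K[\bar u]$ before applying convexity.

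First I would construct a minimizing sequence $(u_n)_{n \in \N} \subset \E$ exactly as in the proof of Theorem~\ref{thmtonelli}: coercivity of $\LL$ gives boundedness in $\W^{1,p}$, reflexivity gives a (non-relabelled) weakly convergent subsequence $u_n \xrightharpoonup[]{\W^{1,p}} \bar u$, and weak closedness of $\E$ places $\bar u \in \E$. The compact embedding $\W^{1,p} \hooktwoheadrightarrow \CC$ then yields $u_n \xrightarrow[]{\CC} \bar u$. The new ingredient is that, because $K$ is a linear bounded operator $\CC \to \CC$, we also obtain $K[u_n] \xrightarrow[]{\CC} K[\bar u]$.

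Next, given $\varepsilon > 0$, I would invoke the uniform equicontinuity condition \eqref{eqequicontcont2} with the associated $\delta > 0$ from \eqref{eqequicont2}. By the two uniform convergences above, for $n$ large enough we have $\Vert u_n - \bar u\Vert_\infty \le \delta$ and $\Vert K[u_n] - K[\bar u]\Vert_\infty \le \delta$ simultaneously, so that for almost every $t \in (a,b)$
\begin{equation}
\bigl| L(u_n,K[u_n],\dot u_n,K[\dot u_n],t) - L(\bar u,K[\bar u],\dot u_n,K[\dot u_n],t)\bigr| \le \varepsilon.
\end{equation}
Integrating in $t$ gives
\begin{equation}
\LL(u_n) \ge \di\int_a^b L(\bar u,K[\bar u],\dot u_n,K[\dot u_n],t)\,dt - (b-a)\varepsilon.
\end{equation}

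Now I would use the convexity of $L(x_1,x_2,\cdot,t)$ on $(\R^d)^2$ to obtain the lower bound
\begin{equation}
L(\bar u,K[\bar u],\dot u_n,K[\dot u_n],t) \ge L(\bar u,K[\bar u],\dot{\bar u},K[\dot{\bar u}],t) + \partial_3 L \cdot (\dot u_n - \dot{\bar u}) + \partial_4 L \cdot (K[\dot u_n] - K[\dot{\bar u}]),
\end{equation}
with $\partial_i L$ evaluated at $(\bar u,K[\bar u],\dot{\bar u},K[\dot{\bar u}],t)$. Regularity of $L$ gives $\partial_3 L \in \L^{p'}$ and $\partial_4 L \in \L^{q'}$; together with $\dot u_n \xrightharpoonup[]{\L^p} \dot{\bar u}$ and (using boundedness of $K:\L^p\to\L^q$) $K[\dot u_n] \xrightharpoonup[]{\L^q} K[\dot{\bar u}]$, the two linear integrals vanish in the limit $n \to \infty$. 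Combining this with the previous display yields $\inf_\E \LL \ge \LL(\bar u) - (b-a)\varepsilon$, and letting $\varepsilon \to 0$ concludes that $\bar u$ is a minimizer.

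The main obstacle is really a conceptual one rather than a technical one: when $L$ is not convex in $x_1$ or $x_2$, we can no longer linearize and pass the linear terms in these slots to the limit; instead we must replace $u_n$ and $K[u_n]$ by $\bar u$ and $K[\bar u]$ \emph{before} using convexity, at the price of an $\varepsilon$-error coming from the equicontinuity modulus. This is only possible because we now have strong (uniform) convergence of both $u_n \to \bar u$ and $K[u_n]\to K[\bar u]$ in $\CC$, which is precisely what the extra assumption $K \in \mathcal{B}(\CC,\CC)$ buys us.
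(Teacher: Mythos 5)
Your proposal is correct and follows essentially the same route as the paper's proof: extract a weakly convergent minimizing sequence, upgrade to uniform convergence of $u_n$ and $K[u_n]$ via the compact embedding and the boundedness of $K$ on $\CC$, freeze the first two slots at the cost of $(b-a)\varepsilon$ using \eqref{eqequicont2}, then linearize via convexity in the last two slots and pass to the limit with the weak convergences $\dot u_n \xrightharpoonup[]{\L^p} \dot{\bar u}$ and $K[\dot u_n] \xrightharpoonup[]{\L^q} K[\dot{\bar u}]$. The only difference is that you spell out the convexity/limit step explicitly where the paper simply refers back to the proof of Theorem~\ref{thmtonelli}.
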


\begin{proof}
Indeed, with the same proof of Theorem~\ref{thmtonelli}, we can construct
a weakly convergent sequence $(u_n)_{n \in \N} \subset \E$ satisfying:
\begin{equation}
u_n \xrightharpoonup[]{\W^{1,p}} \bar{u} \in \E \quad \text{and}
\quad \LL (u_n) \longrightarrow \inf\limits_{u \in \E} \LL (u) < +\infty.
\end{equation}
Since the compact embedding $\W^{1,p} \hooktwoheadrightarrow \CC$ holds,
we have $u_n \xrightarrow[]{\CC} \bar{u}$ and since $K$ is continuous from
$\CC$ to $\CC$, we have $K[u_n] \xrightarrow[]{\CC} K[\bar{u}]$.
Let $\varepsilon > 0$ and let us consider $\delta >0$ given
in equation \eqref{eqequicont2}. There exists $N \in \N$ such that for any $n \geq N$,
$\Vert u_n - \bar{u} \Vert_\infty \leq \delta$ and $\Vert K[u_n] - K[\bar{u}] \Vert_\infty
\leq \delta$. So, for any $n \geq N$ and for almost all $t \in (a,b)$:
\begin{equation}
\vert L(u_n,K[u_n],\dot{u}_n,K[\dot{u}_n],t)
- L(\bar{u},K[\bar{u}],\dot{u}_n,K[\dot{u}_n],t)
\vert \leq \varepsilon.
\end{equation}
Consequently, for any $n \geq N$, we have:
\begin{equation}
\LL (u_n) \geq \di \int_a^b L(\bar{u},K[\bar{u}],
\dot{u}_n,K[\dot{u}_n],t) \; dt - (b-a) \varepsilon.
\end{equation}
From the convexity hypothesis and using the same strategy
as in the proof of Theorem~\ref{thmtonelli},
we have by passing to the limit on $n$:
\begin{equation}
\inf\limits_{u \in \E} \LL (u) \geq \LL (\bar{u}) - (b-a) \varepsilon.
\end{equation}
The proof is complete since the previous inequality is true for any $\varepsilon > 0$.
\end{proof}

Such an improvement allows to give examples of a Lagrangian $L$ without convexity
on its two first variables. Taking inspiration from Example~\ref{ex3},
we can provide the following example:

\begin{example}
Let us consider:
\begin{equation}
L(x_1,x_2,x_3,x_4,t) = c(t) \cos (x_1)
\cdot \sin (x_2)+\dfrac{1}{p} \Vert x_3 \Vert^p + f(t) \cdot x_4,
\end{equation}
where $\fonctionsansdef{c}{[a,b]}{\R}$, $\fonctionsansdef{f}{[a,b]}{\R^d}$
are of class $\CC^1$. In this case, one can prove that $L$ satisfies all hypothesis
of Theorem~\ref{thmtonelliimproved2} and then, we can conclude with the existence
of a minimizer of $\LL$ defined on $\W^{1,p}_a$ for any $1 < p < \infty$ and $1 < q < \infty$.
\end{example}


\subsection{First weaker assumptions in Propositions~\ref{prop1} and \ref{prop1b}}

In this section, we assume that $K$ is moreover a linear bounded operator from
$\CC$ to $\CC$. This hypothesis implies that for any $u \in \W^{1,p}$, $K[u] \in \CC$.
Let us remind that such an assumption is satisfied by fractional integrals of Riemann--Liouville. \\

Consequently, let us define the set $\PP^1_M$ of maps
$\fonctionsansdef{P}{(\R^d)^4 \times [a,b]}{\R^+}$ such that
for any $(x_1,x_2,x_3,x_4,t) \in (\R^d)^4 \times [a,b]$:
\begin{equation}
P(x_1,x_2,x_3,x_4,t) = \di \sum_{k=0}^{N} c_k(x_1,x_2,t)
\Vert x_3 \Vert^{d_{3,k}} \Vert x_4 \Vert^{d_{4,k}},
\end{equation}
with $ N \in \N$ and where, for any $k=0,\ldots,N$,
$\fonctionsansdef{c_k}{(\R^d)^2 \times [a,b]}{\R^+}$ is continuous
and $(d_{3,k},d_{4,k}) \in [0,p] \times [0,q]$ satisfies
$(q/p) d_{3,k} + d_{4,k} \leq (q/M)$. \\

From these new sets of maps, one can prove the following
improved versions of Propositions~\ref{prop1} and \ref{prop1b}:
\begin{itemize}
\item Proposition~\ref{prop1} with the weaker assumption $\PP^1_{M}$ instead of $\PP_{M}$;
\item Proposition~\ref{prop1b} with the weaker assumption $(q/p)d_{3,k}+d_{4,k} \leq q$
instead of $d_{2,k}+(q/p)d_{3,k}+d_{4,k} \leq q$.
\end{itemize}


\subsection{Second weaker assumptions in Propositions~\ref{prop1} and \ref{prop1b}}

In this section, we assume that $K$ is a linear bounded operator from $\L^p$ to $\CC$.
Let us remind that such an assumption is satisfied by fractional integrals
of Riemann--Liouville in the case $\alpha > (1/p)$, see detailed proof in \cite{bour6}.
This hypothesis implies that for any $u \in \W^{1,p}$, $K[u] \in \CC$ and $K[\dot{u}] \in \CC$. \\

Consequently, let us define the set $\PP^2_M$ of maps $\fonctionsansdef{P}{(\R^d)^4
\times [a,b]}{\R^+}$ such that for any $(x_1,x_2,x_3,x_4,t) \in (\R^d)^4 \times [a,b]$:
\begin{equation}
P(x_1,x_2,x_3,x_4,t) = \di \sum_{k=0}^{N} c_k(x_1,x_2,x_4,t) \Vert x_3 \Vert^{d_{3,k}},
\end{equation}
with $ N \in \N$ and where, for any $k=0,\ldots,N$,
$\fonctionsansdef{c_k}{(\R^d)^3 \times [a,b]}{\R^+}$
is continuous and $0 \leq d_{3,k} \leq p$.\\

From these new sets of maps, one can prove the following
improved versions of Propositions~\ref{prop1} and \ref{prop1b}:
\begin{itemize}
\item Proposition~\ref{prop1} with the weaker assumption $\PP^2_{M}$ instead of $\PP_{M}$;
\item Proposition~\ref{prop1b} with the weaker assumption $d_{3,k} \leq p$
instead of $d_{2,k}+(q/p)d_{3,k}+d_{4,k} \leq q$.
\end{itemize}


\section{Conclusion and perspectives}
\label{section5}

In this paper, the operator $K$ is devoted to be a kernel operator.
Nevertheless, one can use the results of Sections~\ref{section1}
and \ref{section2} for any linear operator bounded from $\L^p$ to $\L^q$.


\subsection{Example of a general operator $K$ which is not a kernel}

For instance, one can consider the following substitution operator:
\begin{equation}
\forall f \in \L^p, \; K[f] = f \circ \varphi,
\end{equation}
where $\varphi$ is a $\CC^1$-diffeomorphism on the interval $[a,b]$
satisfying $\varphi (a) = a$ and $\varphi (b) = b$. In this case, one can easily
prove that $K$ is linear and bounded from $\L^p$
to $\L^p$ and its adjoint operator is given by:
\begin{equation}
\forall f \in \L^{p'}, \; K^*[f] = (f \circ \varphi^{-1})/ \dot{\varphi}.
\end{equation}


\subsection{Extension of the method used in this paper}

In this work, we have extended the results of \cite{bour6,bour9}
from fractional Lagrangian functionals to generalized ones. \\

In the same way, although we have generalized our existence result,
it can not cover all the possible Lagrangians and all the possible operators $K$.
Nevertheless, in most of cases, the method can be applied in its whole picture.
For proving the existence of a minimizer for a particular variational problem,
one has just to improve this method with respect to the particular case in question. \\

We end this paper with the following remark.
This method can be applied in many other variational problems:
\begin{itemize}
\item with higher order derivatives;
\item with different operators $K_1$, $K_2$, \ldots;
\item in the multidimensional case;
\item on time scales (and in particular in the discrete-time case).
\end{itemize}
Of course, this is a non exhaustive list of generalizing perspectives
where the method used in this paper can be developed.


\section*{Acknowledgements}

This work is part of the first author's Ph.D. project, carried out at the Universit\'{e}
de Pau et des Pays de l'Adour, under the scientific supervision of J. Cresson and I. Greff.
The second and third authors are grateful to the support of the Center for Research
and Development in Mathematics and Applications (CIDMA), University of Aveiro, Portugal,
and The Portuguese Foundation for Science and Technology (FCT),
within project PEst-C/MAT/UI4106/2011 with COMPETE number FCOMP-01-0124-FEDER-022690.




\begin{thebibliography}{10}

\bibitem{alme}
R.~Almeida, A.B. Malinowska, and D.F.M. Torres.
\newblock A fractional calculus of variations
for multiple integrals with application to vibrating string.
\newblock {\em J. Math. Phys.}, 51(3):033503, 12, 2010.
{\tt arXiv:1001.2722}

\bibitem{bale2}
D.~Baleanu and S.I. Muslih.
\newblock Lagrangian formulation of classical fields within
{R}iemann-{L}iouville fractional derivatives.
\newblock {\em Phys. Scripta}, 72(2-3):119--121, 2005.

\bibitem{bale3}
D.~Baleanu and J.J. Trujillo.
\newblock On exact solutions of a class
of fractional {E}uler-{L}agrange equations.
\newblock {\em Nonlinear Dynam.}, 52(4):331--335, 2008.

\bibitem{boni}
B.~Bonilla, M.~Rivero, L.~Rodr{\'{\i}}guez-Germ\'{a}, and J.~J. Trujillo.
\newblock Fractional differential equations as alternative models
to nonlinear differential equations.
\newblock {\em Appl. Math. Comput.}, 187(1):79--88, 2007.

\bibitem{bour6}
L.~Bourdin.
\newblock Existence of a weak solution
for fractional {E}uler--{L}agrange equations.
\newblock {\em J. Math. Anal. Appl.}, 399(1):239--251, 2013.
{\tt arXiv:1203.1414}

\bibitem{bour9}
L.~Bourdin, T.~Odzijewicz, and D.F.M. Torres.
\newblock Existence of minimizers for fractional variational
problems containing {C}aputo derivatives.
\newblock {\em Adv. Dyn. Syst. Appl.}, 8(1):3--12, 2013.
{\tt arXiv:1208.2363}

\bibitem{brez}
H.~Brezis.
\newblock {\em Functional analysis, {S}obolev spaces and partial differential equations}.
\newblock Universitext. Springer, New York, 2011.

\bibitem{cesa}
L.~Cesari.
\newblock {\em Optimization---theory and applications},
volume~17 of {\em Applications of Mathematics (New York)}.
\newblock Springer-Verlag, New York, 1983.

\bibitem{comt}
F.~Comte.
\newblock Op\'erateurs fractionnaires en \'econom\'etrie et en finance.
\newblock {\em Pr\'epublication MAP5}, 2001.

\bibitem{cres8}
J.~Cresson, I.~Greff, and P.~Inizan.
\newblock Lagrangian for the convection-diffusion equation.
\newblock {\em Math. Methods Appl. Sci.}, 35(15):1885--1895, 2012.
{\tt arXiv:1112.3262}

\bibitem{cres7}
J.~Cresson and P.~Inizan.
\newblock Variational formulations of differential
equations and asymmetric fractional embedding.
\newblock {\em J. Math. Anal. Appl.}, 385(2):975--997, 2012.

\bibitem{daco2}
B.~Dacorogna.
\newblock {\em Direct methods in the calculus of variations},
volume~78 of {\em Applied Mathematical Sciences}.
\newblock Springer, New York, second edition, 2008.

\bibitem{dunc}
T.E. Duncan, B.~Maslowski, and B.~Pasik-Duncan.
\newblock Linear-quadratic control for stochastic equations
in a {H}ilbert space with fractional {B}rownian motions.
\newblock {\em SIAM J. Control Optim.}, 50(1):507--531, 2012.

\bibitem{hilf3}
R.~Hilfer.
\newblock Applications of fractional calculus in physics.
\newblock {\em World Scientific, River Edge, New Jersey}, 2000.

\bibitem{jiao}
F.~Jiao and Y.~Zhou.
\newblock Existence of solutions for a class of fractional
boundary value problems via critical point theory.
\newblock {\em Comput. Math. Appl.}, 62(3):1181--1199, 2011.

\bibitem{kilb}
A.A. Kilbas, H.M. Srivastava, and J.J. Trujillo.
\newblock {\em Theory and applications of fractional differential equations},
volume 204 of {\em North-Holland Mathematics Studies}.
\newblock Elsevier Science B.V., Amsterdam, 2006.

\bibitem{kiry}
V.~Kiryakova.
\newblock {\em Generalized fractional calculus and applications},
volume 301 of {\em Pitman Research Notes in Mathematics Series}.
\newblock Longman Scientific \& Technical, Harlow, 1994.

\bibitem{klim}
M.~Klimek.
\newblock Existence -- uniqueness result for a certain equation
of motion in fractional mechanics.
\newblock {\em Bull. Pol. Acad. Sci.}, 58(4):73--78, 2010.

\bibitem{lore}
C.F. Lorenzo and T.T. Hartley.
\newblock Variable order and distributed order fractional operators.
\newblock {\em Nonlinear Dynam.}, 29(1-4):57--98, 2002.

\bibitem{magi}
R.L. Magin.
\newblock Fractional calculus models of complex dynamics in biological tissues.
\newblock {\em Comput. Math. Appl.}, 59(5):1586--1593, 2010.

\bibitem{torr5}
A.B. Malinowska and D.F.M. Torres.
\newblock {\em Introduction to the fractional calculus of variations}.
\newblock Imperial College Press, London, 2012.

\bibitem{metz}
R.~Metzler and J.~Klafter.
\newblock The random walk's guide to anomalous diffusion:
a fractional dynamics approach.
\newblock {\em Phys. Rep.}, 339(1):77, 2000.

\bibitem{odzi}
T.~Odzijewicz, A.B. Malinowska, and D.F.M. Torres.
\newblock Fractional calculus of variations in terms of
a generalized fractional integral with applications to {P}hysics.
\newblock {\em Abstr. Appl. Anal.}, 2012, Art. ID 871912, 24 pp., 2012.
{\tt arXiv:1203.1961}

\bibitem{odzi2}
T.~Odzijewicz, A.B. Malinowska, and D.F.M. Torres.
\newblock Generalized fractional calculus with applications to the calculus of variations.
\newblock {\em Comput. Math. Appl.}, 64(10):3351--3366, 2012.
{\tt arXiv:1201.5747}

\bibitem{odzi3}
T.~Odzijewicz, A.B. Malinowska, and D.F.M. Torres.
\newblock Fractional variational calculus of variable order.
\newblock {\em Operator Theory: Advances and Applications}, 229:291--301, 2013.
\newblock The Stefan Samko Anniversary Volume (Eds: A. Almeida, L. Castro, F.-O. Speck).
{\tt arXiv:1110.4141}

\bibitem{podl}
I.~Podlubny.
\newblock {\em Fractional differential equations},
volume 198 of {\em Mathematics in Science and Engineering}.
\newblock Academic Press Inc., San Diego, CA, 1999.

\bibitem{riew}
F.~Riewe.
\newblock Nonconservative {L}agrangian and {H}amiltonian mechanics.
\newblock {\em Phys. Rev. E (3)}, 53(2):1890--1899, 1996.

\bibitem{riew2}
F.~Riewe.
\newblock Mechanics with fractional derivatives.
\newblock {\em Phys. Rev. E (3)}, 55(3, part B):3581--3592, 1997.

\bibitem{samk2}
S.G. Samko.
\newblock Fractional integration and differentiation of variable order.
\newblock {\em Anal. Math.}, 21(3):213--236, 1995.

\bibitem{samk}
S.G. Samko, A.A. Kilbas, and O.I. Marichev.
\newblock {\em Fractional integrals and derivatives}.
\newblock Gordon and Breach Science Publishers, Yverdon, 1993.
\newblock Theory and applications, Translated from the 1987 Russian original.

\bibitem{neel}
A.~Zoia, M.-C. N\'eel, and A.~Cortis.
\newblock Continuous-time random-walk model of transport
in variably saturated heterogeneous porous media.
\newblock {\em Phys. Rev. E}, 81(3):031104, 2010.

\bibitem{neel2}
A.~Zoia, M.-C. N\'eel, and M.~Joelson.
\newblock Mass transport subject to time-dependent flow
with nonuniform sorption in porous media.
\newblock {\em Phys. Rev. E}, 80:056301, 2009.

\end{thebibliography}
\end{document}